\newtheorem{thm}{Theorem}[section]
\newtheorem{cor}[thm]{Corollary}
\newtheorem{lem}[thm]{Lemma}
\newtheorem{prop}[thm]{Proposition}
\theoremstyle{definition}
\newtheorem{defi}[thm]{Definition}
\newtheorem{rmk}[thm]{Remark}
\title{Uniform Lipschitz continuity of the isoperimetric profile of compact surfaces under normalized Ricci Flow}
\title[\resizebox{6in}{!}{Uniform Lipschitz continuity of the isoperimetric profile of compact surfaces under normalized Ricci Flow}]{Uniform Lipschitz continuity of the isoperimetric profile of compact surfaces under normalized Ricci Flow}
\author{YIZHONG ZHENG}
\address{Department of Mathematics, The Graduate Center of City University of New York, New York, NY 10016, USA}
\email{yzheng@gradcenter.cuny.edu}
\date{April 2nd, 2020}
\begin{document}

\maketitle

\begin{abstract}
    We show that the isoperimetric profile $h_{g(t)}(\xi)$ of a compact Riemannian manifold $(M,g)$ is jointly continuous when metrics $g(t)$ vary continuously. We also show that, when $M$ is a compact surface and $g(t)$ evolves under normalized Ricci flow, $h^2_{g(t)}(\xi)$ is uniform Lipschitz continuous and hence $h_{g(t)}(\xi)$ is uniform locally Lipschitz continuous.
\end{abstract}

\section{Introduction}
\subsection{Background and main results}

Isoperimetric profile function $h(v)$ of a Riemannian manifold $(M,g)$ of dimension $\geqslant 2$ is the least boundary area enclosing
a given volume $v$. It is well known that the isoperimetric profile is continuous when $M$ is compact, for example, Bavard-Pansu \cite{BP} show that $h$ is locally Lipschitz continuous. Recently, there have been many studies on this function. Nardulli-Russo \cite{NF} show that the continuity also holds for complete manifolds of finite volume. Ritor$\acute{e}$ \cite{Rit} shows that the continuity holds for Hadamard manifolds and complete non-compact manifolds of strictly
positive sectional curvature. However, Nardulli-Pansu \cite{NP} (for dim$\geqslant3$) and  Papasoglu-Swenson \cite{PS} (for dim=2) show that the isoperimetric profile could be discontinuous for some complete connected non-compact Riemannian manifolds. 

In the paper \cite{AB}, Andrews-Bryan prove a comparison theorem for the isoperimetric profiles of solutions of normalized Ricci flow on the two-sphere ($S^2$). They apply the comparison theorem using the Rosenau solution on $S^2$ as the model metric to deduce sharp time-dependent curvature bounds for arbitrary solutions of normalized Ricci flow on $S^2$. Their theorem gives a simple and direct proof of convergence of Ricci flow to a metric of constant curvature on $S^2$. Inspired by Andrews-Bryan's work, we study the isoperimetric profile on compact Riemannian manifolds $(M^n,g)$ ($n\geq{2}$) under continuous variation of metrics and in more depth on compact surfaces under normalized Ricci flow. 

In contrast to the above articles, we choose the volume ratio $\xi \in (0,1)$ as the domain of $h(\xi)$ in this paper rather than the volume. This is because we mainly consider compact manifolds (which have finite volumes) and varying the metric $g$ would potentially change the volume of $(M,g)$. The following are the main results of this paper:

\begin{thm}\label{thm: joint cty of h}
Let $g(t)$ be a family of Riemannian metrics on a compact manifold $M^n$ ($n\geq{2}$) such that $g(t)$ varies continuously. Then $h_{g(t)}(\xi)$ is jointly continuous in t and $\xi$.
\end{thm}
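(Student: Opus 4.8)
The plan is to establish the two one‑sided inequalities that together are equivalent to joint continuity: for every sequence $(t_k,\xi_k)\to(t_0,\xi_0)$ (the domain being metrizable, it suffices to argue sequentially, and we may work in a compact $t$‑neighbourhood of $t_0$) I want to show
$$\limsup_k h_{g(t_k)}(\xi_k)\ \le\ h_{g(t_0)}(\xi_0)\ \le\ \liminf_k h_{g(t_k)}(\xi_k).$$
Throughout I will use two elementary consequences of the continuity of $t\mapsto g(t)$ on the compact manifold $M$: (i) the volume forms $dV_{g(t_k)}$ converge uniformly to $dV_{g(t_0)}$, so $|M|_{g(t_k)}\to|M|_{g(t_0)}$ and $\big||\Omega|_{g(t_k)}-|\Omega|_{g(t_0)}\big|\to 0$ uniformly over all measurable $\Omega$; and (ii) there are constants $c_k,C_k\to 1$ with $c_k\,g(t_0)\le g(t_k)\le C_k\,g(t_0)$ as bilinear forms, whence $c_k^{(n-1)/2}\,\mathrm{Per}_{g(t_0)}(\Omega)\le \mathrm{Per}_{g(t_k)}(\Omega)\le C_k^{(n-1)/2}\,\mathrm{Per}_{g(t_0)}(\Omega)$ for every set of finite perimeter $\Omega$, where $\mathrm{Per}_g$ denotes the $g$‑perimeter.

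For the upper bound, fix the target $(t_0,\xi_0)$ (recall $\xi_0\in(0,1)$, so there is room to both add and remove a little volume) and let $\Omega_0\subset M$ be an isoperimetric region for $g(t_0)$ at ratio $\xi_0$; existence is standard by $BV$ compactness, and regularity theory for isoperimetric regions gives a point $q$ near which $\partial\Omega_0$ is a smooth hypersurface. The idea is to use the \emph{single} set $\Omega_0$, slightly deformed, as a competitor for every nearby $(t_k,\xi_k)$. In $g(t_0)$‑normal coordinates about $q$ write $\partial\Omega_0$ as a graph, fix a bump profile supported in a small coordinate disk, and let $\Omega_0^{(a)}$, $a\in[-a_0,a_0]$, be the set obtained by raising or lowering that graph by $a$. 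Then $a\mapsto|\Omega_0^{(a)}|_{g(t_k)}$ is continuous with range covering an interval of definite length about $|\Omega_0|_{g(t_k)}$; since $|\Omega_0|_{g(t_k)}\to|\Omega_0|_{g(t_0)}=\xi_0|M|_{g(t_0)}$ and also $\xi_k|M|_{g(t_k)}\to\xi_0|M|_{g(t_0)}$, for large $k$ the intermediate value theorem yields $a_k\to 0$ with $|\Omega_0^{(a_k)}|_{g(t_k)}=\xi_k|M|_{g(t_k)}$ exactly. As $\Omega_0^{(a_k)}$ differs from $\Omega_0$ only by a perturbation of size $a_k\to 0$ on a fixed region, $\mathrm{Per}_{g(t_0)}(\Omega_0^{(a_k)})\to \mathrm{Per}_{g(t_0)}(\Omega_0)=h_{g(t_0)}(\xi_0)$, and then (ii) gives $\mathrm{Per}_{g(t_k)}(\Omega_0^{(a_k)})\to h_{g(t_0)}(\xi_0)$ as well. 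Hence $h_{g(t_k)}(\xi_k)\le \mathrm{Per}_{g(t_k)}(\Omega_0^{(a_k)})\to h_{g(t_0)}(\xi_0)$, which is the $\limsup$ bound; in particular the profiles $h_{g(t_k)}(\xi_k)$ are uniformly bounded.

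For the lower bound, let $\Omega_k$ be an isoperimetric region for $(g(t_k),\xi_k)$, so $\mathrm{Per}_{g(t_k)}(\Omega_k)=h_{g(t_k)}(\xi_k)$ is bounded by the previous step; by (ii) the numbers $\mathrm{Per}_{g(t_0)}(\Omega_k)$ are bounded, so after passing to a subsequence $\Omega_k\to\Omega_\infty$ in $L^1(M,dV_{g(t_0)})$ for some set of finite perimeter $\Omega_\infty$. Using (i), $|\Omega_\infty|_{g(t_0)}=\lim_k|\Omega_k|_{g(t_0)}=\lim_k|\Omega_k|_{g(t_k)}=\lim_k\xi_k|M|_{g(t_k)}=\xi_0|M|_{g(t_0)}$, so $\Omega_\infty$ is admissible at ratio $\xi_0$. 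Lower semicontinuity of perimeter under $L^1$ convergence together with (ii) gives $h_{g(t_0)}(\xi_0)\le \mathrm{Per}_{g(t_0)}(\Omega_\infty)\le\liminf_k \mathrm{Per}_{g(t_0)}(\Omega_k)\le\liminf_k \mathrm{Per}_{g(t_k)}(\Omega_k)=\liminf_k h_{g(t_k)}(\xi_k)$. Combining the two bounds yields $h_{g(t_k)}(\xi_k)\to h_{g(t_0)}(\xi_0)$, i.e.\ joint continuity.

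The main obstacle is the upper bound: one must manufacture, for every nearby $(t_k,\xi_k)$, an admissible competitor whose perimeter is controlled by $h_{g(t_0)}(\xi_0)$, which requires tuning the enclosed $g(t_k)$‑volume to the exact value $\xi_k|M|_{g(t_k)}$ at a cost of only a vanishing amount of perimeter. This is precisely where the regularity of the fixed minimizer $\Omega_0$ enters — it provides a clean local deformation with which to adjust volume — and is the reason for working with an honest minimizer rather than an arbitrary near‑minimizer. The lower bound, by contrast, is a soft compactness argument once the uniform perimeter bound from the upper bound is in hand. I also note that specializing the argument to $t_k\equiv t_0$ recovers the continuity of $\xi\mapsto h_{g(t_0)}(\xi)$ on $(0,1)$, so no external continuity input is needed.
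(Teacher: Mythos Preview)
Your argument is correct and follows the same two–step strategy as the paper: a $\limsup$ bound obtained by using the fixed minimizer $\Omega_0$ (with its volume slightly adjusted) as a competitor for each nearby $(t_k,\xi_k)$, and a $\liminf$ bound obtained by applying BV compactness to the sequence of minimizers $\Omega_k$ with respect to the fixed metric $g(t_0)$ and then invoking lower semicontinuity of perimeter together with the metric comparability (ii). The only difference is in the volume–tuning device: the paper, after observing via regularity that $\partial\Omega_0$ is not dense, attaches a small geodesic ball $B_i(p)\subset M\setminus\Omega_0$ or excises a small ball $B_i(q)\subset\Omega_0$ to hit the target volume exactly, whereas you deform $\partial\Omega_0$ as a normal graph near a smooth boundary point; both constructions rest on the same regularity input, both cost only $o(1)$ in perimeter, and the remainder of the proof is identical.
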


\begin{thm}\label{thm: Uniform Lipschitz cty of h^2}
Let $(M,g_0)$ be a compact Riemannian surface and $g(t)$ evolve under normalized Ricci flow using $g_0$ as the initial metric. Then $h^2_{g(t)}(\xi)$ (where $\xi\in[0,1]$) is uniform Lipschitz continuous with Lipschitz constant (in $\xi$) uniformly bounded by $4\pi|M|_{g_0}+4|K_0||M|^2_{g_0}$ over the time-interval $[0,+\infty]$, where $K$ is the Gauss curvature and $K_0=min\{\underset{(M,g_0)}{\text{inf}\;}K, \text{0}\}$.
\end{thm}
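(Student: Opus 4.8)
\emph{Plan of proof.} Since $h_{g(t)}$ is locally Lipschitz on $(0,1)$ for each fixed $t$ by Bavard--Pansu \cite{BP}, the function $\xi\mapsto h_{g(t)}^2(\xi)$ is locally Lipschitz (hence absolutely continuous) on $(0,1)$ and extends continuously to $[0,1]$ with value $0$ at $\xi=0,1$. The plan is therefore to prove a pointwise bound $\bigl|\frac{d}{d\xi}h_{g(t)}^2(\xi)\bigr|\le 4\pi|M|_{g_0}+4|K_0|\,|M|_{g_0}^2$ for every $t\in[0,+\infty]$ and almost every $\xi\in(0,1)$: integrating it and using the boundary values yields uniform Lipschitz continuity on all of $[0,1]$ with the stated constant. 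The proof of the pointwise bound has two halves -- a first-variation identity valid for a fixed metric, and the geometric control supplied by normalized Ricci flow on a surface that renders the bound $t$-independent.

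\emph{First-variation identity.} Fix $t$ and a point $\xi$ at which $h:=h_{g(t)}$ is differentiable. By existence and regularity theory for isoperimetric regions on a compact surface there is a (connected) region $\Omega$ with $|\Omega|_{g(t)}=\xi\,|M|_{g(t)}$ and $|\partial\Omega|_{g(t)}=h(\xi)$ whose boundary consists of finitely many smoothly embedded curves of one common constant geodesic curvature $\kappa_0$ (with respect to the outward normal of $\Omega$); by the symmetry $h(\xi)=h(1-\xi)$ the complement $M\setminus\Omega$ is likewise isoperimetric with boundary geodesic curvature $-\kappa_0$. Flowing $\partial\Omega$ at unit normal speed gives a family $\Omega_s$ with $\frac{d}{ds}\big|_{s=0}|\Omega_s|_{g(t)}=|\partial\Omega|_{g(t)}>0$ and $\frac{d}{ds}\big|_{s=0}|\partial\Omega_s|_{g(t)}=\int_{\partial\Omega}\kappa\,ds=\kappa_0\,|\partial\Omega|_{g(t)}$; comparing $h\bigl(|\Omega_s|_{g(t)}/|M|_{g(t)}\bigr)\le|\partial\Omega_s|_{g(t)}$ with equality at $s=0$ and differentiating at $s=0$ (legitimate since $h$ is differentiable at $\xi$) yields $\frac{d}{d\xi}h(\xi)=\kappa_0\,|M|_{g(t)}$. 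Therefore
\[
\frac{d}{d\xi}h_{g(t)}^2(\xi)=2\,h(\xi)\,\frac{d}{d\xi}h(\xi)=2\,|M|_{g(t)}\,\kappa_0\,|\partial\Omega|_{g(t)}=2\,|M|_{g(t)}\int_{\partial\Omega}\kappa\,ds=2\,|M|_{g(t)}\Bigl(2\pi\chi(\Omega)-\int_\Omega K_{g(t)}\,dA_{g(t)}\Bigr),
\]
the last step by Gauss--Bonnet. Since $\Omega$ is connected, $\chi(\Omega)\le 1$, so $\int_{\partial\Omega}\kappa\,ds\le 2\pi-\int_\Omega K_{g(t)}$; applying Gauss--Bonnet to $M\setminus\Omega$ gives the matching lower bound $\int_{\partial\Omega}\kappa\,ds\ge -2\pi+\int_{M\setminus\Omega}K_{g(t)}$. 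Hence, if $K_{g(t)}\ge K_0$ pointwise (recall $K_0\le 0$), then $\bigl|\int_{\partial\Omega}\kappa\,ds\bigr|\le 2\pi+|K_0|\,|M|_{g(t)}$ and so $\bigl|\frac{d}{d\xi}h_{g(t)}^2(\xi)\bigr|\le 4\pi\,|M|_{g(t)}+2|K_0|\,|M|_{g(t)}^2$, which is no larger than the claimed constant.

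\emph{Input from the flow.} For normalized Ricci flow $\partial_t g=(r-R)g$ on a closed surface, with $R=2K$ the scalar curvature and $r=4\pi\chi(M)/|M|_{g_0}$ the (constant) average of $R$, two classical facts apply: the total area is preserved, so $|M|_{g(t)}=|M|_{g_0}$ for all $t$; and $R$ evolves by $\partial_t R=\Delta R+R^2-rR$, so that at a spatial minimum $\partial_t R_{\min}\ge R_{\min}(R_{\min}-r)$. Comparing with the ODE $\dot\rho=\rho(\rho-r)$ and distinguishing the cases $\chi(M)>0$, $\chi(M)=0$, $\chi(M)<0$ -- in the last case using $\min_M K_{g_0}\le\overline{K}_{g_0}=\tfrac{1}{2}r$ to see that $R_{\min}(0)\le r$ -- one obtains $\min_M K_{g(t)}\ge\min\{\min_M K_{g_0},0\}=K_0$ for all $t\in[0,+\infty)$; the same bound holds at $t=+\infty$ because the normalized flow converges to the metric of constant curvature $\tfrac{1}{2}r\ge K_0$. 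Inserting $|M|_{g(t)}=|M|_{g_0}$ and $K_{g(t)}\ge K_0$ into the estimate above proves the uniform pointwise derivative bound, hence the theorem. The concluding assertion (uniform local Lipschitz continuity of $h_{g(t)}$ itself) then follows from $h_{g(t)}=\sqrt{h_{g(t)}^2}$ together with the fact that $h_{g(t)}$ is bounded away from $0$ on each compact subinterval of $(0,1)$.

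I expect the main difficulty to be the flow input, specifically establishing that $K_0$ -- an invariant of $g_0$ alone -- is a valid lower bound for $K_{g(t)}$ for \emph{all} $t\in[0,+\infty]$: this requires running the scalar maximum principle for $\partial_t R=\Delta R+R^2-rR$ through each sign of $\chi(M)$, correctly identifying the stable equilibrium of the comparison ODE, using $\min_M K_{g_0}\le\overline{K}_{g_0}$ to rule out the negative-curvature case driving the minimum below $K_0$, and invoking convergence of the normalized flow to handle $t=+\infty$. A secondary point requiring care is the first-variation step at almost every $\xi$: one needs existence and $C^\infty$ regularity of isoperimetric regions in surfaces (with boundary of constant geodesic curvature), the reduction to the Euler-characteristic bound $\chi(\Omega)\le 1$, and the fact that the one-sided comparison with competitors genuinely pins down $\frac{d}{d\xi}h_{g(t)}$ at differentiability points of $h_{g(t)}$.
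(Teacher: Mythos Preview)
Your Gauss--Bonnet approach is a genuinely different route from the paper's, and if it went through it would be more direct and would even deliver the sharper constant $4\pi|M|_{g_0}+2|K_0||M|_{g_0}^2$. The gap is the connectedness assertion. You claim there is a \emph{connected} isoperimetric region $\Omega$, and for the lower bound you implicitly use $\chi(M\setminus\Omega)\le 1$, i.e.\ connectedness of the complement as well. Neither holds in general: on a compact surface isoperimetric regions can be disconnected (picture a sphere with several spherical buds attached by thin necks --- for suitable volume the minimizer is a union of buds), and since the complement of an isoperimetric region is itself isoperimetric for the complementary volume ratio, the same phenomenon breaks $\chi(M\setminus\Omega)\le 1$. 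If $\Omega$ is a disjoint union of $k$ disks you only get $\int_{\partial\Omega}\kappa\,ds = 2\pi k - \int_\Omega K$, and no a priori bound on $k$ is available from first variation alone. So the step ``$\chi(\Omega)\le 1$, hence $\int_{\partial\Omega}\kappa\,ds\le 2\pi-\int_\Omega K$'' does not go through as written, and likewise for the matching lower bound.

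The paper avoids this by not trying to bound $\frac{d}{d\xi}h^2$ pointwise via topology of $\Omega$. Instead it invokes the Bavard--Pansu/Chavel \emph{second}-variation inequality to show that $Q(t,\xi):=h_{g(t)}^2(\xi)+K_0(\xi|M|)^2$ is concave on $[0,1]$; this barrier second-derivative estimate holds for any isoperimetric region, connected or not. Concavity then reduces the Lipschitz bound for $Q$ to computing the one-sided derivatives at $\xi=0^+$ and $\xi=1^-$, which come from the asymptotic $h^2(\xi)=4\pi|M|\xi+O(\xi^2)$ established in the paper's Theorem~\ref{thm: error coeff control of h(g(t))} (whose proof explicitly treats disconnected minimizers for small $\xi$). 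Adding back the Lipschitz constant $2|K_0||M|^2$ of $\xi\mapsto -K_0(\xi|M|)^2$ yields the stated bound for $h^2$. Your treatment of the flow input --- $K_{g(t)}\ge K_0$ via the maximum principle for $\partial_t K=\Delta K+2K(K-\overline K)$, and preservation of total area --- matches the paper and is fine; it is only the first-variation/Gauss--Bonnet step that needs to be replaced by the concavity argument.
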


\begin{cor}\label{cor: Uniform Lipschitz cty of h}
Let $(M,g_0)$ be a compact Riemannian surface and $g(t)$ evolve under normalized Ricci flow using $g_0$ as the initial metric. Then for any $T \in [0, \infty]$, for any fixed compact subinterval $[\xi_{0},\xi_{1}]\subseteq(0,1)$, the Lipschitz constant of $h_{g(t)}(\xi)$ (in $\xi$) is uniformly bounded over the time-interval $[0,T]$ by $2\alpha^{-1}( \pi|M|_{g_0}+|K_0||M|^2_{g_0})$, where $\alpha:=inf\{h_{g(t)}(\xi):t\in [0,T],\xi \in [\xi_0,\xi_1]\}>0$ and $K_0=min\{\underset{(M,g_0)}{\text{inf}\;}K, \text{0}\}$.
\end{cor}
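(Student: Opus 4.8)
The plan is to deduce Corollary~\ref{cor: Uniform Lipschitz cty of h} from Theorem~\ref{thm: Uniform Lipschitz cty of h^2} by an elementary algebraic manipulation, using the positivity of the isoperimetric profile on $(0,1)$ together with the joint continuity of Theorem~\ref{thm: joint cty of h} to keep the relevant denominator bounded away from $0$.

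First I would record that, for fixed $t$ and for $a,b\in[\xi_0,\xi_1]\subseteq(0,1)$,
\[
h_{g(t)}(a)-h_{g(t)}(b)=\frac{h^2_{g(t)}(a)-h^2_{g(t)}(b)}{h_{g(t)}(a)+h_{g(t)}(b)},
\]
which makes sense because $h_{g(t)}(\xi)>0$ for every $\xi\in(0,1)$ (a standard fact for compact manifolds, also contained in Bavard--Pansu \cite{BP}: a limit of regions of fixed volume ratio $\xi\in(0,1)$ with boundary areas tending to $0$ would be a set of finite perimeter with zero perimeter and volume ratio $\xi$, impossible on a connected $M$), so the denominator is nonzero on $[\xi_0,\xi_1]$.

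Next I would verify $\alpha=\inf\{h_{g(t)}(\xi):t\in[0,T],\ \xi\in[\xi_0,\xi_1]\}>0$. For $T<\infty$ this is immediate: $[0,T]\times[\xi_0,\xi_1]$ is compact, $h_{g(t)}(\xi)$ is continuous on it by Theorem~\ref{thm: joint cty of h}, and it is everywhere positive, so it attains a positive minimum. For $T=\infty$ one uses in addition that normalized Ricci flow on a compact surface converges smoothly as $t\to\infty$ to a metric $g(\infty)$ of constant curvature (Hamilton, Chow; for $S^2$ also a consequence of Andrews--Bryan \cite{AB}), so the family $\{g(t)\}$ extends to a continuously varying family over the compact parameter interval $[0,\infty]$; Theorem~\ref{thm: joint cty of h} then gives a continuous, strictly positive function on the compact set $[0,\infty]\times[\xi_0,\xi_1]$, again with positive minimum $\alpha$.

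Finally I would combine the two: Theorem~\ref{thm: Uniform Lipschitz cty of h^2} gives $|h^2_{g(t)}(a)-h^2_{g(t)}(b)|\le(4\pi|M|_{g_0}+4|K_0||M|^2_{g_0})|a-b|$ for all $t$, while $h_{g(t)}(a)+h_{g(t)}(b)\ge 2\alpha$ whenever $a,b\in[\xi_0,\xi_1]$ and $t\in[0,T]$, so
\[
|h_{g(t)}(a)-h_{g(t)}(b)|\le\frac{4\pi|M|_{g_0}+4|K_0||M|^2_{g_0}}{2\alpha}\,|a-b|=2\alpha^{-1}\bigl(\pi|M|_{g_0}+|K_0||M|^2_{g_0}\bigr)|a-b|,
\]
which is exactly the claimed uniform Lipschitz bound. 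The only step that is not purely formal is the passage to $t=\infty$ in the proof that $\alpha>0$, i.e. invoking long-time convergence of the flow to extend joint continuity to the endpoint $t=\infty$; that is where I would be most careful, and it is the single place where the global behavior of normalized Ricci flow (as opposed to its mere smoothness) is used.
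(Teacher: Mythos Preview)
Your proposal is correct and matches the paper's proof essentially line for line: the paper uses the same factorization $h(\xi)-h(\eta)=\frac{h^2(\xi)-h^2(\eta)}{h(\xi)+h(\eta)}$, applies Theorem~\ref{thm: Uniform Lipschitz cty of h^2} to the numerator, and bounds the denominator below by $2\alpha$, invoking Theorem~\ref{thm: joint cty of h} for $\alpha>0$. The case $T=\infty$ is handled in the paper exactly as you suggest (via convergence of normalized Ricci flow to extend the continuous family to a compact parameter interval), though the paper places that argument in a remark following the proof rather than in the proof itself.
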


\begin{rmk}\label{rmk: geometric meaning of boundedness of Lipschitz constant of h}
Corollary \ref{cor: Uniform Lipschitz cty of h} has a geometric meaning. Namely, at each time $t$ under normalized Ricci flow on a compact Riemannian surface, if one picks an isoperimetric region $\Omega$ whose area ratio lies in $[\xi_{0},\xi_{1}]$, then the absolute value of geodesic curvature of $\partial \Omega$ is uniformly bounded by $2\alpha^{-1}( \pi|M|_{g_0}+|K_0||M|^2_{g_0})$.
\end{rmk}

The paper is organized as follows. In section 2 we first review some definitions and then prove the comparability of perimeters of a set of finite perimeter under equivalent metrics. Moreover we prove one compactness theorem for sets of finite perimeter with uniform perimeter bound under continuous convergence of metrics on compact manifolds. In section 3, we prove main theorem \ref{thm: joint cty of h}. In section 4, we prove the uniform asymptotic behavior of $h_{g(t)}(\xi)$ as $\xi \rightarrow{0}$ over time-interval $[0,T]$ on compact surfaces. Lastly, main theorem \ref{thm: Uniform Lipschitz cty of h^2} and corollary \ref{cor: Uniform Lipschitz cty of h} are proved in section 5.

\subsection{Acknowledgements}
This work is part of my doctoral thesis work under the supervision of Professor Hans-Joachim Hein and Professor Bianca Santoro. I gratefully acknowledge their patient guidance, constant encouragement and help. I would also like to thank Doctor Pei-Ken Hung for many helpful discussions when I started this project. My thanks also go to Professor Simon Brendle, Professor Gao Chen and Professor Richard Hamilton for their suggestions.

\section{Definitions and preliminary results}

In this section, we first review the definition of sets of finite perimeter, which was introduced by Caccioppoli in 1927 and De Giorgi in 1950s, and then we prove a comparability theorem and a compactness theorem mentioned above.

\begin{defi}
Given a Riemannian manifold $(M^n,g)$, we say a set $E \subseteq M$ is \textit{measurable} if it is measurable with respect to the canonical Riemannian measure induced by the metric g. Let $\chi_E$ denote the characteristic function of set $E$.
\end{defi}

\begin{rmk}
All Riemannian manifolds $M$ in our discussion are assumed to be connected, without boundary and of dimension $\geq{2}$ unless otherwise stated. All subsets of $M$ in our discussion are assumed to be measurable.
\end{rmk}

\begin{defi}
Given a function $u\in L^1(M,g)$, define the \textit{(total) variation} of $u$ by
$$
|Du|_g(M):=\text{sup}\;\{\intop_{M}{u\; \text{\text{div}}(Y)dV_g}:Y\in\mathfrak{X}_c(M), \;|Y|_g\leqslant{1}\},
$$ where $\mathfrak{X}_c(M)$ denotes the set of all smooth vector fields on $M$ with compact support.

A function $u\in L^1(M)$ is of \textit{bounded variation} if its variation is finite. Let $BV(M,g)$ denote the set of all functions of bounded variation on $M$. Clearly, $BV(M,g)\subseteq{L^1(M,g)}$.
\end{defi}

\begin{rmk}\label{rmk: variation of C^1 functions}
(i) If $u\in C^{1}_c(M)$, then $|Du|_g(M)=\intop_{M}{|\nabla u|dV_g}$.

(ii) The map $u\longmapsto |Du|_g(M)$ is $L^1$ lower semi-continuous.
\end{rmk}

\begin{defi}
Let $E\subseteq{M}$ have finite volume, i.e. $\chi_E\in L^1(M,g)$. We define the \textit{perimeter} of $E$ to be the variation of $\chi_E$
$$
P_g(E):=|D\chi_E|(M)=\text{sup}\;\{\intop_{E}\text{div}(Y)dV_g:Y\in\mathfrak{X}_c(M),\;|Y|_g\leqslant1\}.
$$
If $P_g(E)<\infty$, then we say $E$ is a \textit{set of finite perimeter}.
\end{defi}

\begin{rmk}
(i) If $\partial E$ is smooth, then $P_g(E)=|\partial E|_g$ by Stokes' Theorem.

\noindent (ii) If $N\subseteq{M}$ is a set of measure zero, then $P_g(N)=0$, so $P_g(E \cup N)=P_g(E)$.

\noindent (iii) It is easy to show that $P_g(E)=P_g(E^c)$, where $E^c=M\backslash{E}$.
\end{rmk}

Let ${\mathbb{P}}(M,g):=\{\text{all measurable subsets of (M,g)}\}$. We define the distance between two sets $E$ and $G$ in $\mathbb{P}$ by $d(E,G):=|E\Delta G|_g$, where $\Delta$ denotes the symmetric difference and we consider two sets $E$ and $G$ to be equivalent if and only if $|E\Delta G|_g=0$. Then $({\mathbb{P}}(M,g), d)$ is a metric space.

\begin{defi}
We say a sequence $E_j$ of sets of finite perimeter \textit{converges to a set $E$ in $L^1$}, written  $E_j\rightarrow{E}$ in $L^1(M,g)$, if $|E_j \Delta E|_g\rightarrow{0}$ or equivalently $\chi_{E_j}\rightarrow{\chi_E}$ in $L^1(M,g)$.
\end{defi}

\begin{rmk}
(i) The metric space $({\mathbb{P}}(M,g), d)$ is complete.

(ii) If $E_j\rightarrow{E}$ in $L^1(M,g)$, then $E$ has finite volume, i.e. $\chi_E \in L^1(M,g)$. So it makes sense to talk about $P(E)$ (though it may be infinite).

(iii) It is not hard to see that the perimeter function is lower semi-continuous with respect to the $L^1$ convergence.
\end{rmk}

\subsection{Smooth Approximation to Sets of Finite Perimeter on Manifolds}

\indent In the Euclidean setting, it is well known that domains with smooth boundary are dense in the sets of finite perimeter. For example, see Theorem 3.42 of the book \cite{AFP}. What we need is the Riemannian-manifold analogue. First, we need a useful lemma.

\begin{lem}[Proposition 1.4 of \cite{JPPP}]\label{lem: smooth approximation of functions to characteristic function of set of finite perimeter} For every $u\in BV(M)$, there exists a sequence $u_k\in C_c^{\infty}(M)$ such that $u_k \rightarrow{u}$ in $L^1(M,g)$ and $|Du|(M)=\underset{k\rightarrow{\infty}}{\text{lim}\;}\intop_{M}{|\nabla u_k|dV_g}$.
\end{lem}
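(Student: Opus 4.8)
The plan is to transplant the classical Euclidean density theorem of Anzellotti--Giaquinta (cf.\ Theorem~3.42 of \cite{AFP}) to $M$ by means of a partition of unity, the only genuinely new ingredient being a quantitative control of the chart metric distortion. I take $M$ compact throughout (so that $C^\infty_c(M)=C^\infty(M)$ and finite atlases are available); the noncompact case requires in addition a routine truncation of $u$ near infinity, which I omit.

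First I would reduce the statement to the one-parameter claim: for every $\delta>0$ there exists $v\in C^\infty(M)$ with $\|v-u\|_{L^1(M,g)}<\delta$ and $\int_M|\nabla v|\,dV_g<|Du|_g(M)+\delta$. Granting this, taking $\delta=1/k$ produces $u_k\to u$ in $L^1$ with $\limsup_{k\to\infty}\int_M|\nabla u_k|\,dV_g\le|Du|_g(M)$, while the $L^1$ lower semicontinuity of the total variation (Remark~\ref{rmk: variation of C^1 functions}) gives $\liminf_{k\to\infty}\int_M|\nabla u_k|\,dV_g\ge|Du|_g(M)$; together these give the asserted limit.

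To prove the one-parameter claim, fix $\delta>0$, cover $M$ by finitely many geodesic balls $U_1,\dots,U_N$ of a small radius $r$ with bounded overlap and normal-coordinate charts $x_i\colon U_i\to B_r\subset\mathbb R^n$, so that $g_{jk}=\delta_{jk}+O(r^2)$ in each chart, and let $\{\phi_i\}$ be a smooth partition of unity subordinate to $\{U_i\}$ with $\sum_i\phi_i\equiv1$ and $\sum_i|\nabla\phi_i|=O(1/r)$. Push $\phi_i u$ forward to a compactly supported $w_i\in BV(\mathbb R^n)$, mollify to $w_i*\rho_{\varepsilon_i}$ (standard mollifier), and pull back to $v_i\in C^\infty_c(U_i)$; set $v:=\sum_{i=1}^N v_i\in C^\infty(M)$. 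Using the product rule $D(\phi_i u)=\phi_i\,Du+u\,d\phi_i\,dV_g$ and $d(w_i*\rho_{\varepsilon_i})=(Dw_i)*\rho_{\varepsilon_i}$, the one-form $dv$ decomposes chart by chart into a ``bulk'' part built from $\phi_i\,Du$ and a ``cutoff'' part built from $u\,d\phi_i$. For the bulk part, $\|\mu*\rho_{\varepsilon}\|_{L^1}\le|\mu|(\mathbb R^n)$ together with the distortion bound yields a contribution $\le(1+O(r^2))\,|Du|_g(M)$. For the cutoff part, each $u\,d\phi_i$ is a genuine $L^1$ section, so for $\varepsilon_i$ small its mollification is within $\delta\,2^{-i}$ of it in $L^1$; summing and using $\sum_i u\,d\phi_i=u\,d(\sum_i\phi_i)=0$ on $M$, the cutoff contribution is bounded by $\delta$ plus $\|u\|_{L^1}$ times the pointwise defect between the chart-Euclidean and the Riemannian gradients of the $\phi_i$, which is $O(r^2)\sum_i|\nabla\phi_i|=O(r)$. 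Choosing $r$ and then the $\varepsilon_i$ small enough (the latter also ensuring $\|v-u\|_{L^1}<\delta$) gives $v$ as required.

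I expect the cutoff part to be the main obstacle: mollification is an intrinsically Euclidean, chart-dependent operation, so the exact cancellation $\sum_i d\phi_i=0$ that drives the Euclidean argument is respected only approximately once each piece is smoothed in its own chart. The resolution is the quantitative fact that in normal coordinates the metric distortion is only $O(r^2)$ whereas the cutoff gradients grow like $O(1/r)$, so, against the bounded multiplicity of the cover, the total error from the cutoff terms is $O(r)$ and vanishes as $r\to0$. Everything else --- $L^1$-convergence of mollifications, equivalence of Euclidean and Riemannian norms on a fixed chart, lower semicontinuity, and the diagonal argument --- is routine.
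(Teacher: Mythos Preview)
The paper does not supply its own proof of this lemma; it is quoted as Proposition~1.4 of \cite{JPPP} and used as a black box thereafter. So there is no in-paper argument to compare against, and your proposal is effectively a reconstruction of the cited result.

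Your outline is the standard Anzellotti--Giaquinta density proof transplanted chart by chart, and it is correct. One minor comment: the $O(r^2)$-versus-$O(1/r)$ balance you flag as ``the main obstacle'' in the cutoff part is not actually needed. Each $u\,d\phi_i$ is an intrinsic $L^1$ one-form on $M$ (the differential $d\phi_i$ is chart-independent), and its chart-$i$ mollification converges to it in $L^1(M,g)$ because the chart and Riemannian norms are uniformly equivalent on $U_i$; since you choose $\varepsilon_i$ \emph{after} fixing the cover, you can make each approximation error as small as you like. Hence the summed cutoff contribution is already within $\delta$ of $\sum_i u\,d\phi_i=0$, with no residual $O(r)$ term to absorb. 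Where the metric-distortion control genuinely enters is the bulk estimate, turning $\sum_i\int_M\phi_i\,d|Du|_g=|Du|_g(M)$ into $(1+O(r^2))\,|Du|_g(M)$; that is the place where letting $r\to0$ is required.
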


\begin{rmk}\label{rmk: range of u_k can be in [0,1]}
If $u=\chi_E\in BV(M)$ in the above lemma for some $E\subseteq{M}$, then the convergent sequence $u_k$ can be chosen to additionally satisfy $0\leqslant u_k \leqslant 1$ for all $k$.
\end{rmk}

Once we have the above lemma, we can follow the proof of the Euclidean case to get the following density result on manifolds.

\begin{defi}\label{d:iso_profile}
Given a Riemannian manifold $(M,g)$, let's denote $$\mathcal{F}_g:=\{E \subseteq{M}: P_g(E)<\infty\} \;\;\text{and}\;\; \widetilde{\mathcal{F}}_g:=\{E \subseteq{M}: \partial E\; \text{is smooth and}\;|\partial E|<\infty\}.$$
\end{defi}

\begin{prop}\label{prop:desity of smooth boundary sets}
$\widetilde{\mathcal{F}}$ is dense in $\mathcal{F}$. More precisely, given any $E \in \mathcal{F}$, there exists a sequence $E_j \in \widetilde{\mathcal{F}}$ such that
$$
E_j \rightarrow{E} \;\;in\;\;L^1(M,g)\;\;and\;\;|\partial E_j|_g\rightarrow{P_g(E)}.
$$
\end{prop}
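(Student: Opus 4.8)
The plan is to mimic the Euclidean argument (Theorem 3.42 of \cite{AFP}), using Lemma \ref{lem: smooth approximation of functions to characteristic function of set of finite perimeter} and Remark \ref{rmk: range of u_k can be in [0,1]} to replace the Euclidean mollification step. First I would fix $E \in \mathcal{F}_g$, so $\chi_E \in BV(M,g)$, and invoke Lemma \ref{lem: smooth approximation of functions to characteristic function of set of finite perimeter} together with Remark \ref{rmk: range of u_k can be in [0,1]} to obtain $u_k \in C_c^\infty(M)$ with $0 \leqslant u_k \leqslant 1$, $u_k \to \chi_E$ in $L^1(M,g)$, and $\int_M |\nabla u_k|\, dV_g \to P_g(E)$. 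The idea is then to slice each $u_k$ by a level set: for $s \in (0,1)$ put $E_k^s := \{x \in M : u_k(x) > s\}$, whose boundary is contained in the level set $\{u_k = s\}$, and which is smooth for a.e.\ $s$ by Sard's theorem applied to $u_k$.

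Next I would apply the coarea formula for the smooth function $u_k$: $\int_M |\nabla u_k|\, dV_g = \int_0^1 |\{u_k = s\}|_g\, ds = \int_0^1 \mathcal{H}^{n-1}(\partial^* E_k^s)\, ds$ (using that $0 \leqslant u_k \leqslant 1$ so only $s \in (0,1)$ contributes), and, separately, the layer-cake identity $\int_M |u_k - \chi_E|\, dV_g \geqslant \int_0^1 |E_k^s \Delta E|_g\, ds$ — here one splits $M$ into $\{x \in E\}$ and $\{x \notin E\}$ and notes $\int_0^1 \chi_{E_k^s \Delta E}(x)\, ds$ equals $u_k(x)$ off $E$ and $1 - u_k(x)$ on $E$, both bounded by $|u_k(x) - \chi_E(x)|$. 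Now I would choose, for each $k$, a good level $s_k \in (0,1)$ such that simultaneously (i) $\{u_k = s_k\}$ is a regular level set (a.e.\ $s$ works), (ii) $|E_k^{s_k} \Delta E|_g \leqslant 2 \int_M |u_k - \chi_E|\, dV_g \to 0$, and (iii) $\mathcal{H}^{n-1}(\partial E_k^{s_k}) \leqslant (1 + \varepsilon_k) \int_M |\nabla u_k|\, dV_g$ for suitable $\varepsilon_k \to 0$ — such an $s_k$ exists because the set of $s$ violating (ii) or (iii) has measure bounded away from $1$ by Chebyshev applied to the two integral identities above, while (i) fails only on a null set. Setting $E_j := E_k^{s_k}$ then gives $E_j \in \widetilde{\mathcal F}_g$ with $E_j \to E$ in $L^1$ and $\limsup_j |\partial E_j|_g \leqslant P_g(E)$; the reverse inequality $\liminf_j |\partial E_j|_g \geqslant P_g(E)$ is automatic from lower semicontinuity of the perimeter under $L^1$ convergence, so $|\partial E_j|_g \to P_g(E)$.

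The main technical point — and the place where the manifold setting genuinely differs from \cite{AFP} — is \emph{not} the slicing argument (the coarea formula and Sard's theorem hold verbatim on a smooth Riemannian manifold) but rather ensuring at the outset that the approximating functions can be taken with values in $[0,1]$; this is exactly what Remark \ref{rmk: range of u_k can be in [0,1]} supplies, and without it the coarea computation would pick up unwanted contributions from levels outside $(0,1)$. A secondary point worth a careful sentence is that for a regular value $s_k$ the topological boundary $\partial E_k^{s_k}$ coincides $\mathcal{H}^{n-1}$-a.e.\ with the smooth hypersurface $\{u_k = s_k\}$ and with the reduced boundary $\partial^* E_k^{s_k}$, so that $|\partial E_j|_g = P_g(E_j) = \mathcal{H}^{n-1}(\{u_k = s_k\})$ and all three notions agree; once this is noted the estimates assemble immediately. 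Compactness of $M$ plays no essential role here beyond guaranteeing $\chi_E \in L^1$, so the statement as phrased is clean.
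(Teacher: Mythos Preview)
Your approach is exactly what the paper intends: the paper gives no proof beyond the sentence ``once we have the above lemma, we can follow the proof of the Euclidean case to get the following density result on manifolds,'' and your write-up is precisely that argument---coarea plus Sard applied to the approximants supplied by Lemma~\ref{lem: smooth approximation of functions to characteristic function of set of finite perimeter}, with Remark~\ref{rmk: range of u_k can be in [0,1]} playing the role you identify.

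There is one small quantitative slip worth fixing. With the constant $2$ in~(ii), Markov's inequality bounds the bad set for~(ii) by measure at most $1/2$, while the bad set for~(iii) has measure at most $1/(1+\varepsilon_k)$; if $\varepsilon_k\to 0$ these two bounds sum to something exceeding $1$, so Chebyshev alone does not guarantee a common good level. The remedy is immediate: replace the constant $2$ in~(ii) by any sequence $C_k\to\infty$ chosen so that $C_k\|u_k-\chi_E\|_{L^1}\to 0$ (for instance $C_k=\|u_k-\chi_E\|_{L^1}^{-1/2}$). Then the bad set for~(ii) has measure at most $1/C_k\to 0$, leaving room to take $\varepsilon_k$ of order $1/C_k\to 0$ in~(iii) while the total bad set stays strictly below measure $1$. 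With this adjustment the selection of $s_k$ goes through, and the remainder of your argument---including the lower-semicontinuity step and the identification of $\partial E_k^{s_k}$ with the smooth level set---is correct and matches the paper's intended route.
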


Applying Lemma \ref{lem: smooth approximation of functions to characteristic function of set of finite perimeter} and remark \ref{rmk: range of u_k can be in [0,1]}, we can prove the following proposition.

\begin{prop}\label{prop: perimeter ineq for sets intersection and union}
Given a Riemannian manifold $(M^n,g)$ and two sets of finite perimeter $A$ and $B$, we have $P_g(A \cup B) + P_g(A \cap B) \leqslant P_g(A) + P_g(B)$.
\end{prop}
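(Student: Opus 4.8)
The plan is to prove the submodularity inequality $P_g(A\cup B)+P_g(A\cap B)\leqslant P_g(A)+P_g(B)$ by reducing to the smooth case via the approximation machinery already set up, exploiting the fact that on a smooth level the union and intersection of two smooth domains can be controlled by their boundaries. First I would recall that for sets with smooth boundary the perimeter equals the boundary area, and that when $\partial A$ and $\partial B$ meet transversally one has the pointwise decomposition $\partial(A\cup B)\subseteq \partial A\cup\partial B$ and $\partial(A\cap B)\subseteq\partial A\cup\partial B$, with $\partial A$ partitioned (up to a negligible set) into the part inside $B$ and the part outside $B$, and similarly for $\partial B$; this gives $|\partial(A\cup B)|_g+|\partial(A\cap B)|_g\leqslant|\partial A|_g+|\partial B|_g$ directly. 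However, two given smooth domains need not meet transversally, so the cleaner route is to work instead with the $BV$ characterization and use the $C_c^\infty$ approximation.

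The main step: by Lemma \ref{lem: smooth approximation of functions to characteristic function of set of finite perimeter} together with Remark \ref{rmk: range of u_k can be in [0,1]}, choose sequences $u_k,v_k\in C_c^\infty(M)$ with $0\leqslant u_k,v_k\leqslant 1$, $u_k\to\chi_A$ and $v_k\to\chi_B$ in $L^1(M,g)$, and $\int_M|\nabla u_k|\,dV_g\to P_g(A)$, $\int_M|\nabla v_k|\,dV_g\to P_g(B)$. Then $u_k\vee v_k=\max(u_k,v_k)$ and $u_k\wedge v_k=\min(u_k,v_k)$ are Lipschitz functions with compact support converging in $L^1$ to $\chi_{A\cup B}$ and $\chi_{A\cap B}$ respectively (using $\chi_{A\cup B}=\max(\chi_A,\chi_B)$, $\chi_{A\cap B}=\min(\chi_A,\chi_B)$, and the fact that $|\max(a,b)-\max(c,d)|\leqslant|a-c|+|b-d|$ etc., so $L^1$ convergence is preserved). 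The key pointwise identity is that almost everywhere $|\nabla(u_k\vee v_k)|+|\nabla(u_k\wedge v_k)|=|\nabla u_k|+|\nabla v_k|$: on $\{u_k>v_k\}$ both sides equal $|\nabla u_k|+|\nabla v_k|$ since there $\nabla(u_k\vee v_k)=\nabla u_k$ and $\nabla(u_k\wedge v_k)=\nabla v_k$, symmetrically on $\{u_k<v_k\}$, and $\{u_k=v_k\}$ is handled because the gradients of $u_k$ and $v_k$ agree a.e. on that set (a standard fact: $\nabla(u_k-v_k)=0$ a.e. on $\{u_k=v_k\}$). Integrating gives $\int_M|\nabla(u_k\vee v_k)|\,dV_g+\int_M|\nabla(u_k\wedge v_k)|\,dV_g=\int_M|\nabla u_k|\,dV_g+\int_M|\nabla v_k|\,dV_g$. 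Since Lipschitz compactly supported functions lie in $BV$ with $|Dw|_g(M)=\int_M|\nabla w|\,dV_g$, and since perimeter/total variation is $L^1$ lower semicontinuous (Remark \ref{rmk: variation of C^1 functions}(ii)), we conclude
\[
P_g(A\cup B)+P_g(A\cap B)\leqslant\liminf_{k\to\infty}\Bigl(\int_M|\nabla(u_k\vee v_k)|\,dV_g+\int_M|\nabla(u_k\wedge v_k)|\,dV_g\Bigr)=P_g(A)+P_g(B).
\]

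The step I expect to be the main obstacle is justifying the pointwise gradient identity rigorously for merely Lipschitz functions — in particular the claim that $\nabla u_k=\nabla v_k$ almost everywhere on the coincidence set $\{u_k=v_k\}$, which is needed so that the $\{u_k=v_k\}$ contributions balance rather than doubling up. This is the standard Stampacchia-type lemma (the gradient of a $W^{1,1}$ function vanishes a.e. on any level set of a function it differs from by that function), applied to $u_k-v_k$; one must note $u_k\vee v_k$ and $u_k\wedge v_k$ are Lipschitz hence in $W^{1,1}_{\mathrm{loc}}$ with the expected a.e. gradients. A minor secondary point is verifying the $L^1$ convergence of the max/min sequences and that their total variation is given by the gradient integral, but both are routine given Remark \ref{rmk: variation of C^1 functions}(i) extended to Lipschitz functions by a further mollification if one wishes to stay strictly within $C^1$, or simply by noting Lipschitz functions are in $BV$.
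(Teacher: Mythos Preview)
Your argument is correct and follows the same overall scheme the paper points to: approximate $\chi_A,\chi_B$ by smooth $u_k,v_k$ via Lemma~\ref{lem: smooth approximation of functions to characteristic function of set of finite perimeter}, pass to algebraic combinations that converge to $\chi_{A\cup B},\chi_{A\cap B}$, and invoke lower semicontinuity. The one difference is in the choice of combinations. You take $u_k\vee v_k$ and $u_k\wedge v_k$, which are only Lipschitz and force you through the Stampacchia lemma on the coincidence set; the paper's explicit appeal to Remark~\ref{rmk: range of u_k can be in [0,1]} signals instead the products $u_kv_k\to\chi_{A\cap B}$ and $u_k+v_k-u_kv_k\to\chi_{A\cup B}$, which remain in $C_c^\infty(M)$ and satisfy
\[
|\nabla(u_kv_k)|+|\nabla(u_k+v_k-u_kv_k)|\leqslant v_k|\nabla u_k|+u_k|\nabla v_k|+(1-v_k)|\nabla u_k|+(1-u_k)|\nabla v_k|=|\nabla u_k|+|\nabla v_k|,
\]
where $0\leqslant u_k,v_k\leqslant 1$ is exactly what makes the triangle-inequality step work. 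In your max/min route that hypothesis is never actually used, which is why it may have looked superfluous; the product route explains why the paper singles it out, and it sidesteps the Lipschitz/$W^{1,1}$ detour you flagged as the main obstacle.
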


\subsection{Sets of Finite Perimeter under Equivalent Metrics}

\indent Before we study the behavior of sets of finite perimeter under continuous variation of Riemannian metrics, it is important to first study the behavior of sets of finite perimeter on a Riemannian manifold when one changes the equipped metric to another equivalent one.

\begin{thm}[Comparability of perimeters under equivalent metrics]\label{thm: comparability of perimeter V1}
Let $(M^n,g_1)$ be a Riemannian manifold and $E \subseteq{M}$ be a set of finite perimeter with respect to $g_1$. If $g_2$ is a metric equivalent to $g_1$, i.e. $\frac{1}{C}g_1\leqslant{g_2}\leqslant{Cg_1}$ for some constant $C\geq{1}$, then 
$$C^{-\frac{n-1}{2}}P_{g_1}(E)\leqslant P_{g_2}(E)\leqslant
C^{\frac{n-1}{2}}P_{g_1}(E).
$$
\end{thm}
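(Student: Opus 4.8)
The plan is to reduce the estimate on perimeters to a pair of pointwise estimates: one comparing the volume elements $dV_{g_1}$ and $dV_{g_2}$, and one comparing the pointwise norms $|Y|_{g_1}$ and $|Y|_{g_2}$ of a vector field, and one comparing the two divergence operators applied to the same $Y$. The cleanest route, however, is to avoid having to analyze $\mathrm{div}_{g_2}$ directly by instead using the smooth-approximation machinery already set up: by Proposition~\ref{prop:desity of smooth boundary sets} there is a sequence $E_j \in \widetilde{\mathcal{F}}_{g_1}$ with $E_j \to E$ in $L^1(M,g_1)$ and $|\partial E_j|_{g_1} \to P_{g_1}(E)$. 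Since $g_1$ and $g_2$ are equivalent, $L^1(M,g_1) = L^1(M,g_2)$ with comparable norms, so $E_j \to E$ in $L^1(M,g_2)$ as well; and for the smooth sets $E_j$ we have $P_{g_i}(E_j) = |\partial E_j|_{g_i}$ by Stokes' theorem. So it suffices to prove the inequality for sets with smooth boundary, where ``perimeter'' is literally the $(n-1)$-dimensional Riemannian area of the hypersurface $\partial E_j$, and then pass to the limit using lower semicontinuity of $P_{g_2}(\cdot)$ under $L^1(M,g_2)$-convergence to get one direction, and symmetrically (swapping the roles of $g_1$ and $g_2$, using $\tfrac1C g_2 \leqslant g_1 \leqslant C g_2$) to get the other.

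The core pointwise computation is then the following linear-algebra fact: if $h_1, h_2$ are inner products on an $(n-1)$-dimensional vector space $V$ with $\tfrac1C h_1 \leqslant h_2 \leqslant C h_1$, then the induced volume forms satisfy $C^{-(n-1)/2}\,\mathrm{vol}_{h_1} \leqslant \mathrm{vol}_{h_2} \leqslant C^{(n-1)/2}\,\mathrm{vol}_{h_1}$. This follows by diagonalizing $h_2$ with respect to $h_1$: the eigenvalues $\lambda_1, \dots, \lambda_{n-1}$ all lie in $[C^{-1}, C]$, and the ratio of volume forms is $\sqrt{\lambda_1 \cdots \lambda_{n-1}}$, which lies in $[C^{-(n-1)/2}, C^{(n-1)/2}]$. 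Applying this on each tangent space $T_p(\partial E_j)$ with $h_i$ the metric induced by $g_i$ (noting that the restriction of $g_i$ to the tangent space of the hypersurface still satisfies the same equivalence bounds), and integrating over $\partial E_j$, yields
$$
C^{-\frac{n-1}{2}} \,|\partial E_j|_{g_1} \leqslant |\partial E_j|_{g_2} \leqslant C^{\frac{n-1}{2}}\,|\partial E_j|_{g_1}.
$$

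To finish, let $j \to \infty$. The right-hand inequality $|\partial E_j|_{g_2} \leqslant C^{(n-1)/2} |\partial E_j|_{g_1} \to C^{(n-1)/2} P_{g_1}(E)$, combined with $L^1(M,g_2)$-lower semicontinuity of $P_{g_2}$ (so $P_{g_2}(E) \leqslant \liminf_j P_{g_2}(E_j) = \liminf_j |\partial E_j|_{g_2}$), gives $P_{g_2}(E) \leqslant C^{(n-1)/2} P_{g_1}(E)$. For the reverse, run the same argument with $g_1$ and $g_2$ interchanged: approximate $E$ by smooth-boundary sets adapted to $g_2$, use $\tfrac1C g_2 \leqslant g_1 \leqslant C g_2$ and $L^1(M,g_1)$-lower semicontinuity of $P_{g_1}$, to obtain $P_{g_1}(E) \leqslant C^{(n-1)/2} P_{g_2}(E)$, i.e. $P_{g_2}(E) \geqslant C^{-(n-1)/2} P_{g_1}(E)$. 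I expect the only genuinely delicate point to be bookkeeping the equivalence of the $L^1$ spaces and making sure the approximating sequence works simultaneously for both metrics (which is immediate once one observes $\tfrac1{C^{n/2}} dV_{g_1} \leqslant dV_{g_2} \leqslant C^{n/2} dV_{g_1}$ from the same eigenvalue argument in dimension $n$); the geometry of the hypersurface area comparison is the routine part.
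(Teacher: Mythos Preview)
Your argument is correct and, for the upper bound $P_{g_2}(E)\leqslant C^{(n-1)/2}P_{g_1}(E)$, matches the paper's proof exactly: smooth-boundary approximation via Proposition~\ref{prop:desity of smooth boundary sets}, the pointwise $(n-1)$-dimensional volume-form comparison on $\partial E_j$, and lower semicontinuity of $P_{g_2}$.

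The one genuine difference is in the lower bound. You obtain it by symmetry: once $P_{g_2}(E)<\infty$ is established, you re-apply Proposition~\ref{prop:desity of smooth boundary sets} with respect to $g_2$ and rerun the same argument with the roles of $g_1,g_2$ swapped. The paper instead switches tools at this point: it invokes Lemma~\ref{lem: smooth approximation of functions to characteristic function of set of finite perimeter} to approximate $\chi_E$ by smooth functions $u_k$ in $L^1(M,g_2)$ with $\int_M|\nabla u_k|_{g_2}\,dV_{g_2}\to P_{g_2}(E)$, and then compares $\int_M|\nabla u_k|_{g_2}\,dV_{g_2}$ with $\int_M|\nabla u_k|_{g_1}\,dV_{g_1}$ directly via the pointwise bounds $|\nabla u_k|_{g_2}\geqslant C^{-1/2}|\nabla u_k|_{g_1}$ and $dV_{g_2}\geqslant C^{-n/2}dV_{g_1}$, finishing with lower semicontinuity of the variation in $g_1$. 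Your symmetry argument is cleaner and avoids introducing a second approximation lemma; the paper's route has the mild advantage of making the $C^{-(n-1)/2}$ factor appear transparently as $C^{1/2}\cdot C^{-n/2}$ from the gradient-norm and volume-form comparisons. Both are complete.
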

\begin{proof}
Given $P_{g_1}(E)<\infty$, then by Proposition \ref{prop:desity of smooth boundary sets}, we know there exists a sequence of smooth domains $E_k$ such that $E_k \rightarrow{E}$ in $L^1(M,g_1)$ and $|\partial E_k|_{g_1}\rightarrow{P_{g_1}(E)}$. Then $E_k \rightarrow{E}$ in $L^1(M,g_2)$ because
$$
|E_k\;\Delta\;E|_{g_2}\leqslant{C^{\frac{n}{2}}|E_k\;\Delta\;E|_{g_1}}\rightarrow{0}.
$$
Note that since each $\partial E_k$ is smooth, the metrics induced by $g_1$ and $g_2$ on each $\partial E_k$ are also equivalent. In particular, we have ${C^{-\frac{n-1}{2}}|\partial E_k|_{g_1}}\leqslant |\partial E_k|_{g_2}\leqslant{C^{\frac{n-1}{2}}|\partial E_k|_{g_1}}$ for each $k$. Then lower semi-continuity of perimeter implies that 
$$P_{g_2}(E)\leqslant{\underset{k\rightarrow{\infty}}{\text{liminf\;}}}P_{g_2}(E_k)=\underset{k\rightarrow{\infty}}{\text{liminf\;}}|\partial E_k|_{g_2}\leqslant \underset{k\rightarrow{\infty}}{\text{liminf\;}}C^{\frac{n-1}{2}}|\partial E_k|_{g_1}=C^{\frac{n-1}{2}}P_{g_1}(E).
$$
Now we know $P_{g_2}(E)<\infty$, so that $\chi_{E}\in BV(M,g_2)$. Then Lemma \ref{lem: smooth approximation of functions to characteristic function of set of finite perimeter} tells us that there exists a sequence of smooth functions $u_k \in C^{\infty}_{c}(M)$ such that 
$$
u_k \rightarrow{\chi_{E}}\;\; in \;\; L^1(M,g_2) \;\; and\;\; P_{g_2}(E)=|D\chi_{E}|_{g_2}(M)=\underset{k\rightarrow{\infty}}{\text{lim}\;}\intop_{M}{|\nabla u_k|_{g_2}dV_{g_2}}.
$$
If the following claim holds, then we are done.

\noindent \textbf{Claim:} $\underset{k\rightarrow{\infty}}{\text{lim}\;}\intop_{M}{|\nabla u_k|_{g_2}dV_{g_2}}\geq{C^{-\frac{n-1}{2}}P_{g_1}(E)}$. 

\noindent Proof of the Claim. First, note that $u_k \rightarrow{\chi_{E}}\;\text{in} \;L^1(M,g_2)$ implies that $u_k \rightarrow{\chi_{E}}\;\text{in} \;L^1(M,g_1)$ because 
$$
\intop_{M}{|u_k-\chi_E|dV_{g_1}}\leqslant \intop_{M}{C^{n/2}|u_k-\chi_E|dV_{g_2}}\rightarrow{0}.
$$
Since $u_k \in C^{\infty}_{c}(M)$, remark \ref{rmk: variation of C^1 functions} gives $|Du_k|_{g_1}(M)=\intop_{M}{|\nabla u_k|_{g_1}dV_{g_1}}$. Then lower semi-continuity of variation implies
$$
\underset{k\rightarrow{\infty}}{\text{liminf\;}}\intop_{M}{|\nabla u_k|_{g_1}dV_{g_1}}=\underset{k\rightarrow{\infty}}{\text{liminf\;}}|Du_k|_{g_1}(M)\geqslant{|D\chi_E|_{g_1}(M)}=P_{g_1}(E)
$$ 
Thus
$$
\underset{k\rightarrow{\infty}}{\text{lim}\;}\intop_{M}{|\nabla u_k|_{g_2}dV_{g_2}}\geq{\underset{k\rightarrow{\infty}}{\text{liminf\;}}\intop_{M}{C^{1/2}|\nabla u_k|_{g_1}C^{-n/2}dV_{g_1}}}\geqslant{C^{-\frac{n-1}{2}}P_{g_1}(E)},
$$as claimed.
\end{proof}

\begin{cor}
Let $(M,g_1)$ be a Riemannian manifold and $E \subseteq{M}$ be a measurable subset. If $g_2$ is equivalent to $g_1$, then $P_{g_1}(E)<\infty$ if and only if $P_{g_2}(E)<\infty$.
\end{cor}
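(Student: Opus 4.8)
The plan is to deduce this directly from the comparability estimate just proved in Theorem \ref{thm: comparability of perimeter V1}, using nothing more than the symmetry of the equivalence relation between metrics. First I would observe that since $\tfrac{1}{C}g_1 \leqslant g_2 \leqslant C g_1$, the induced Riemannian volume measures satisfy $C^{-n/2}\,dV_{g_1} \leqslant dV_{g_2} \leqslant C^{n/2}\,dV_{g_1}$, so $|E|_{g_1} < \infty$ if and only if $|E|_{g_2} < \infty$; hence $\chi_E \in L^1(M,g_1)$ iff $\chi_E \in L^1(M,g_2)$, and the perimeters $P_{g_1}(E)$ and $P_{g_2}(E)$ are simultaneously well-defined (possibly infinite) as total variations. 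This ensures both sides of the claimed equivalence make sense.

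Next, assume $P_{g_1}(E) < \infty$. Then $E$ is a set of finite perimeter with respect to $g_1$, so Theorem \ref{thm: comparability of perimeter V1} applies verbatim and yields $P_{g_2}(E) \leqslant C^{\frac{n-1}{2}} P_{g_1}(E) < \infty$. For the converse, I would use that the condition $\tfrac{1}{C}g_1 \leqslant g_2 \leqslant C g_1$ is equivalent to $\tfrac{1}{C}g_2 \leqslant g_1 \leqslant C g_2$, i.e. $g_1$ is equivalent to $g_2$ with the same constant $C$. Thus if instead $P_{g_2}(E) < \infty$, applying Theorem \ref{thm: comparability of perimeter V1} with the roles of $g_1$ and $g_2$ interchanged gives $P_{g_1}(E) \leqslant C^{\frac{n-1}{2}} P_{g_2}(E) < \infty$. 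Combining the two implications proves the biconditional.

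There is no real obstacle here: the corollary is an immediate consequence of the two-sided bound in Theorem \ref{thm: comparability of perimeter V1} together with the symmetry of metric equivalence. The only minor point worth flagging is the well-definedness of the perimeter, which requires $E$ to have finite volume; as noted above this property is automatically preserved under passing to an equivalent metric, so no separate argument is needed.
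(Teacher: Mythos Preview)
Your proof is correct and is exactly the approach the paper intends: the corollary follows immediately from the two-sided bound in Theorem~\ref{thm: comparability of perimeter V1} together with the symmetry of the equivalence relation between metrics, which is why the paper states it without proof. Your additional remark about finite volume being preserved under equivalent metrics (so that the perimeter is well-defined on both sides) is a nice point of care but not strictly needed once one interprets ``$P_{g_1}(E)<\infty$'' as already presupposing $\chi_E\in L^1(M,g_1)$.
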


\begin{cor}
If $g_1$ and $g_2$ are two equivalent metrics on a manifold $M$, then
$\mathcal{F}_{g_1} = \mathcal{F}_{g_2}$ and $\widetilde{\mathcal{F}}_{g_1}=\widetilde{\mathcal{F}}_{g_2}$.
\end{cor}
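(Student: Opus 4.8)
The plan is to reduce both equalities to facts that are already available. The first observation is that equivalent metrics induce mutually absolutely continuous Riemannian measures with two-sided bounded densities: from $\tfrac{1}{C}g_1 \leqslant g_2 \leqslant C g_1$ one gets $C^{-n/2}\,dV_{g_1} \leqslant dV_{g_2} \leqslant C^{n/2}\,dV_{g_1}$, so $g_1$ and $g_2$ have the same null sets, hence the same measurable sets, and a set has finite $g_1$-volume if and only if it has finite $g_2$-volume. In particular $\mathbb{P}(M,g_1)$ and $\mathbb{P}(M,g_2)$ consist of the same (equivalence classes of) sets, and it remains only to compare finiteness of perimeters and of boundary areas.

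For $\mathcal{F}_{g_1}=\mathcal{F}_{g_2}$, I would simply invoke the preceding corollary (equivalently, the two-sided estimate $C^{-\frac{n-1}{2}}P_{g_1}(E)\leqslant P_{g_2}(E)\leqslant C^{\frac{n-1}{2}}P_{g_1}(E)$ of Theorem \ref{thm: comparability of perimeter V1}): since $P_{g_1}(E)<\infty$ if and only if $P_{g_2}(E)<\infty$, the two families are literally the same collection of sets.

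For $\widetilde{\mathcal{F}}_{g_1}=\widetilde{\mathcal{F}}_{g_2}$, the point is that ``$\partial E$ is smooth'' is a differential-topological condition on the subset $\partial E \subseteq M$ that makes no reference to any metric, so it is unchanged when passing from $g_1$ to $g_2$. When $\partial E$ is a smooth hypersurface, $|\partial E|_{g_i}$ is its $(n-1)$-dimensional Riemannian volume computed with the metric that $g_i$ restricts to $\partial E$; since $g_1$ and $g_2$ restrict to equivalent metrics on $\partial E$ with the same constant $C$, the same $(n-1)$-dimensional volume-scaling argument used in the proof of Theorem \ref{thm: comparability of perimeter V1} yields $C^{-\frac{n-1}{2}}|\partial E|_{g_1}\leqslant|\partial E|_{g_2}\leqslant C^{\frac{n-1}{2}}|\partial E|_{g_1}$, so one side is finite exactly when the other is, and $\widetilde{\mathcal{F}}_{g_1}=\widetilde{\mathcal{F}}_{g_2}$ follows. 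There is no real obstacle here; the only point that deserves a line of care is that $\partial E$ need not be compact or connected, so $|\partial E|_g$ must be understood as the total $(n-1)$-volume and the scaling inequality applied componentwise (or along an exhaustion) before passing to the sum or supremum — harmless since $C$ is uniform.
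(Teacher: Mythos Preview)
Your argument is correct and is exactly the spelling-out that the paper has in mind: the paper states this corollary without proof, treating it as immediate from the preceding comparability theorem and its first corollary, together with the obvious fact that smoothness of $\partial E$ is metric-independent and that the induced $(n-1)$-dimensional volumes scale by the same factor. Your added remarks about measurability, finite volume, and handling noncompact $\partial E$ are all fine and do not depart from the intended route.
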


\begin{cor}
Let $g_1$ and $g_2$ be any two Riemannian metrics on a compact manifold $M$. Then
$\mathcal{F}_{g_1} = \mathcal{F}_{g_2}$ and $\widetilde{\mathcal{F}}_{g_1}=\widetilde{\mathcal{F}}_{g_2}$.
\end{cor}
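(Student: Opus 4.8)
The plan is to reduce the statement to the corollaries already proved for equivalent metrics, so the only substantive point is to verify that on a compact manifold \emph{any} two Riemannian metrics are equivalent in the sense of Theorem~\ref{thm: comparability of perimeter V1}, i.e.\ that there is a constant $C\geqslant 1$ with $\tfrac1C g_1 \leqslant g_2 \leqslant C g_1$. Granting this, the equalities $\mathcal{F}_{g_1}=\mathcal{F}_{g_2}$ and $\widetilde{\mathcal{F}}_{g_1}=\widetilde{\mathcal{F}}_{g_2}$ follow immediately from the preceding corollary (the one stating these same equalities for equivalent metrics).

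For the equivalence, first I would pass to the $g_1$-unit sphere bundle $S^{g_1}M:=\{(x,v)\in TM : g_1(v,v)=1\}$, which is compact because $M$ is. The function $(x,v)\mapsto g_2(v,v)$ is continuous and, since $g_2$ is positive definite and $v\neq 0$ on $S^{g_1}M$, strictly positive there; hence it attains a positive minimum $a>0$ and a finite maximum $b<\infty$ on $S^{g_1}M$. By homogeneity of quadratic forms, for every $x\in M$ and every $v\in T_xM$ one gets $a\,g_1(v,v)\leqslant g_2(v,v)\leqslant b\,g_1(v,v)$. Setting $C:=\max\{\,b,\;a^{-1},\;1\,\}\geqslant 1$ yields $\tfrac1C g_1\leqslant g_2\leqslant C g_1$, as required.

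With the equivalence in hand, Theorem~\ref{thm: comparability of perimeter V1} and its corollaries apply verbatim and finish the proof. I do not expect any genuine obstacle here; the only place demanding a little care is the compactness-and-homogeneity argument just sketched. (If one wishes to be completely explicit about $\widetilde{\mathcal{F}}$, one can also argue directly: the condition ``$\partial E$ is smooth'' makes no reference to the metric, and a smooth closed hypersurface in a compact manifold is itself compact, hence automatically of finite area with respect to \emph{any} Riemannian metric; thus $\widetilde{\mathcal{F}}_g$ does not depend on $g$ at all.)
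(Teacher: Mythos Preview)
Your proposal is correct and follows the same approach as the paper: both reduce to the previous corollary by observing that any two metrics on a compact manifold are equivalent. The paper simply states this fact without justification, whereas you spell out the standard sphere-bundle compactness argument; your additional direct remark about $\widetilde{\mathcal{F}}$ is a nice bonus but not needed.
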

\begin{proof}
This follows from the fact that every two metrics on a compact manifold are equivalent.
\end{proof}

\subsection{The Isoperimetric Profile and Compactness for Sets of Finite Perimeter on Compact Manifolds}

\begin{defi}\label{d:iso_profile}
Given a compact Riemannian manifold $(M,g)$ (which has finite volume), let's consider the following two functions, for volume ratio $\xi \in (0,1)$,
$$\widetilde{h}(\xi):=\text{inf}\;\{|\partial\Omega|_g:\;|\Omega|_g=\xi|M|_g,\;\partial\Omega\;\text{is smooth}\},$$
and
$$h(\xi):=\text{inf}\;\{P_g(\Omega):\;|\Omega|_g=\xi|M|_g,\;P_g(\Omega)<\infty\}.$$

A set $\Omega$ of finite perimeter that attains the infimum for $h(\xi)$ is called an \textit{isoperimetric region} for $\xi$ (or for $(g,\xi)$). In this case  $\partial \Omega$ is called an \textit{isoperimetric hypersurface}.
\end{defi}

The Euclidean version of compactness theorem for sets of finite perimeter is well known (see Theorem 12.26 of \cite{Mag}). In our application, what we need is the Riemannian-manifold analogue under continuous convergence of metrics. We start with the following special case of a fixed metric which can be proved by (i) fixing a finite family of coordinate charts $\{U_i,\varphi_i\}_{i=1}^{N}$ that covers $M$ such that each $U_i$ has a Lipschitz boundary and then (ii) invoke both Proposition \ref{prop: perimeter ineq for sets intersection and union} and the Euclidean version of compactness theorem in each chart by noting that the induced metric on all charts are uniformly equivalent to the Euclidean metric, and finally (iii) carefully patch up the resulting sets together.

\begin{lem}\label{lem: Compactness for sets of finite perimeter}

Let $(M,g)$ be a compact Riemannian manifold. If $\Omega_k\subseteq{M}$ is a sequence of sets of finite perimeter with uniform bounded perimeter, i.e. $P(\Omega_k)\leqslant{C}$, then there is a subsequence $\Omega_{k_j}$ and a set $W\subseteq{M}$ of finite perimeter such that
\begin{enumerate}[(1)]
    \item $\Omega_{k_j}\rightarrow{W}$ in $L^1(M,g)$ or equivalently $|\Omega_{k_j}\Delta W|_g\rightarrow0$ as $j\rightarrow\infty$, and
    \item $|\Omega_{k_j}|_g\longrightarrow |W|_g$ as $j\rightarrow\infty$, and
    \item $\underset{j\rightarrow\infty}{liminf}\;P_g(\Omega_{k_j})\geqslant P_g(W)$.
\end{enumerate}
\end{lem}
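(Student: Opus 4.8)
The plan is to reduce the statement to the classical Euclidean compactness theorem (Theorem 12.26 of \cite{Mag}) via a finite atlas, and then glue the limiting sets together. First I would fix a finite family of coordinate charts $\{(U_i,\varphi_i)\}_{i=1}^{N}$ covering $M$ such that each $\varphi_i(U_i)$ is a bounded open set with Lipschitz boundary in $\mathbb{R}^n$, and such that the pushforward metric $(\varphi_i)_*g$ is uniformly equivalent to the Euclidean metric on $\varphi_i(U_i)$, say $\lambda^{-1}\delta \leqslant (\varphi_i)_*g \leqslant \lambda\delta$ for a single constant $\lambda\geqslant 1$ independent of $i$. By Theorem~\ref{thm: comparability of perimeter V1} (and its underlying localization), for each $i$ the sets $\varphi_i(\Omega_k\cap U_i)$ have Euclidean perimeter in $\varphi_i(U_i)$ bounded by a constant depending only on $C$, $\lambda$, and the charts; here one uses that restricting to an open set with Lipschitz boundary does not blow up the perimeter, together with Proposition~\ref{prop: perimeter ineq for sets intersection and union} to control $P(\Omega_k\cap U_i)$ in terms of $P(\Omega_k)$ and $P(U_i)$.

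Next I would run a diagonal argument: apply the Euclidean compactness theorem in $U_1$ to extract a subsequence along which $\Omega_k\cap U_1 \to W^{(1)}$ in $L^1$, then pass to a further subsequence for $U_2$, and so on, obtaining after $N$ steps a single subsequence $\Omega_{k_j}$ such that $\chi_{\Omega_{k_j}} \to \chi_{W^{(i)}}$ in $L^1(U_i,g)$ for every $i$, where each $W^{(i)}$ has finite perimeter in $U_i$. On overlaps $U_i\cap U_{i'}$ the two $L^1$ limits must agree almost everywhere, so the $W^{(i)}$ patch together to a well-defined measurable set $W\subseteq M$ with $\chi_{W}\restriction_{U_i}=\chi_{W^{(i)}}$. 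Since $\{U_i\}$ is a finite cover, $\chi_{\Omega_{k_j}}\to\chi_{W}$ in $L^1(M,g)$, which is conclusion (1); conclusion (2) follows immediately because $L^1$ convergence of characteristic functions forces $|\Omega_{k_j}|_g=\int_M\chi_{\Omega_{k_j}}\,dV_g \to \int_M\chi_W\,dV_g=|W|_g$; and conclusion (3) is exactly the lower semicontinuity of the perimeter under $L^1$ convergence already recorded in the excerpt, which in particular shows $P_g(W)<\infty$.

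The main obstacle is the patching step (iii): making precise that the local $L^1$ limits are genuinely restrictions of a single global set, and, more delicately, controlling the perimeter of the truncated pieces $\Omega_k\cap U_i$ and of $W$ itself without losing mass along the boundaries $\partial U_i$. The clean way to handle this is to choose the charts so that the $U_i$ are ``fattened'' — i.e. take a second cover $\{V_i\}$ with $\overline{V_i}\subseteq U_i$ still covering $M$, apply Euclidean compactness on the slightly larger $U_i$, and read off the limit on $V_i$, so that each point of $M$ lies in some $V_i$ comfortably inside a chart where convergence holds; the Lipschitz-boundary hypothesis on the charts guarantees the restriction operator on BV-functions is bounded, keeping all perimeters finite. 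Once the limit set $W$ is constructed globally, properties (1)–(3) are formal consequences of the definitions and the lower semicontinuity already established.
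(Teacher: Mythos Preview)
Your proposal is correct and matches the paper's own approach almost verbatim: the paper only sketches the proof, saying to (i) fix a finite cover by charts $\{(U_i,\varphi_i)\}_{i=1}^N$ with Lipschitz boundary, (ii) invoke Proposition~\ref{prop: perimeter ineq for sets intersection and union} together with the Euclidean compactness theorem in each chart using uniform equivalence of the induced metric with the Euclidean one, and (iii) carefully patch the resulting limit sets. Your write-up fleshes out exactly these three steps, including the diagonal extraction and the fattened-cover device for step~(iii), so there is nothing to add.
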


\begin{rmk}
This lemma says $\{E \subseteq{M}: P_g(E)\leqslant{C}\}$ is a compact subset of $({\mathbb{P}}(M,g), d)$ when $M$ is a compact manifold.
\end{rmk}

Applying the above compactness result to minimizing sequences for $h(\xi)$, we can easily prove the following well-known existence of isoperimetric regions on compact manifolds.

\begin{prop}[Existence of isoperimetric regions]\label{prop:Existence of isoperimetric region}
Let $(M,g)$ be a compact Riemannian manifold. For any $\xi \in (0,1)$, there exists an isoperimetric region for $\xi$. That is to say, there exists a set $\Omega$ of finite perimeter such that $|\Omega|_g=\xi|M|_g$ and $P_g(\Omega)=h(\xi)$.
\end{prop}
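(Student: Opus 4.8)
The plan is to run the direct method in the calculus of variations, with the compactness Lemma \ref{lem: Compactness for sets of finite perimeter} doing essentially all of the work. First I would confirm that $h(\xi)<\infty$, so that the infimum is taken over a nonempty class. Since $M$ is compact, choose a smooth function $f\colon M\to\mathbb{R}$ (e.g. the distance function to a fixed point, or a Morse function) for which $s\mapsto |\{f<s\}|_g$ is continuous and sweeps out all of $[0,|M|_g]$. By Sard's theorem, for almost every $s$ the level set $\{f=s\}$ is a smooth hypersurface of finite area, so $\{f<s\}\in\widetilde{\mathcal F}_g\subseteq\mathcal F_g$; picking a regular value $s$ with $|\{f<s\}|_g=\xi|M|_g$ (possible by the intermediate value theorem, perturbing $s$ slightly if necessary) produces a competitor and shows $h(\xi)<\infty$.

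Next I would take a minimizing sequence: sets $\Omega_k$ of finite perimeter with $|\Omega_k|_g=\xi|M|_g$ and $P_g(\Omega_k)\to h(\xi)$. For all large $k$ we have $P_g(\Omega_k)\leqslant h(\xi)+1=:C$, so the sequence has uniformly bounded perimeter. Lemma \ref{lem: Compactness for sets of finite perimeter} then yields a subsequence $\Omega_{k_j}$ and a set $W\subseteq M$ of finite perimeter with $\Omega_{k_j}\to W$ in $L^1(M,g)$, with $|\Omega_{k_j}|_g\to |W|_g$, and with $\liminf_{j\to\infty}P_g(\Omega_{k_j})\geqslant P_g(W)$.

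Finally I would identify $W$ as the desired region. Property (2) of the compactness lemma gives $|W|_g=\lim_{j\to\infty}|\Omega_{k_j}|_g=\xi|M|_g$, so $W$ is admissible for $h(\xi)$ and hence $P_g(W)\geqslant h(\xi)$ by definition of the infimum; property (3), i.e. lower semicontinuity of perimeter, gives $P_g(W)\leqslant\liminf_{j\to\infty}P_g(\Omega_{k_j})=h(\xi)$. Combining the two inequalities yields $P_g(W)=h(\xi)$, so $W$ is an isoperimetric region for $\xi$. I do not expect any serious obstacle here: the preservation of the volume constraint in passing to the $L^1$-limit (property (2)) and the lower semicontinuity of $P_g$ (property (3)) are precisely what make the direct method close up, and the only point requiring a word of care is the construction of a competitor of the exact prescribed volume in the first step, which is handled by the continuity of $s\mapsto|\{f<s\}|_g$ together with Sard's theorem.
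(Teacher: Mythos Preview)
Your proposal is correct and is precisely the approach the paper indicates: the paper simply states that the proposition follows by ``applying the above compactness result to minimizing sequences for $h(\xi)$,'' and your argument is a faithful unpacking of that sentence---take a minimizing sequence, invoke Lemma \ref{lem: Compactness for sets of finite perimeter} to extract a limit $W$, then use volume convergence (property (2)) to confirm the constraint and lower semicontinuity (property (3)) to close the direct method.

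One small wrinkle in your first paragraph: ``perturbing $s$ slightly if necessary'' does not quite work as written, since perturbing $s$ to a regular value also perturbs the volume away from $\xi|M|_g$. A clean fix is to take $f$ Morse, so that the critical values are finite and every sublevel set, even at a critical value, still has finite perimeter (the level set near a Morse critical point is a cone of finite $(n-1)$-measure); alternatively, pick a nearby regular value and then adjust the volume exactly by adding or removing a small geodesic ball, exactly as in the proof of Theorem \ref{thm: joint cty of h}. Either way the competitor exists and $h(\xi)<\infty$, so the direct method goes through as you describe.
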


Using the existence of isoperimetric regions and the density of smooth domains in sets of finite perimeter, it is not hard to show that $h(\xi)=\widetilde{h}(\xi)$, for all $\xi \in (0,1)$.

\begin{defi}
Since $h(\xi)=\widetilde{h}(\xi)$, we can just call them 
the \textit{isoperimetric profile} of $(M,g)$, denoted by $h_{g}(\xi)$ or $h(\xi)$.
\end{defi}

Now, let's prove the compactness theorem with continuous convergent metrics, which is the key ingredient of proving joint continuity of the isoperimetric profile in next section.

\begin{thm}[Compactness for sets of finite perimeter with convergent metrics]\label{lem:Compactness for sets of finite perimeter with changing metrics}

Let $(M^n,g(t_0))$ be a compact Riemannian manifold and $g(t)$ be a family of metrics converging to ${g(t_0)}$ continuously. If $t_k \rightarrow t_0$ and $\Omega_k\subseteq{M}$ is a sequence of sets of finite perimeter with uniform bounded perimeter, i.e. $P_{g(t_k)}(\Omega_k)\leqslant{C}$ (the constant C is independent of $\Omega_k$ and $g(t_k)$), then there is a subsequence $\Omega_{k_j}$ and a set $W\subseteq{M}$ of finite perimeter (with respect to $g(t_0)$) such that
\begin{enumerate}[(1)]
    \item $|\Omega_{k_j}\Delta W|_{g(t_{k_j})}\rightarrow0$ as $j\rightarrow\infty$, and
    \item $|\Omega_{k_j}|_{g(t_{k_j})}\longrightarrow |W|_{g(t_0)}$ as $j\rightarrow\infty$, and
    \item $\underset{j\rightarrow\infty}{\text{liminf\;}}(P_{g(t_{k_j})}(\Omega_{k_j}))\geqslant P_{g(t_0)}(W)$.
\end{enumerate}
\end{thm}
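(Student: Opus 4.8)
The plan is to reduce everything to the fixed-metric compactness result, Lemma \ref{lem: Compactness for sets of finite perimeter}, applied with the limiting metric $g(t_0)$, and then transfer the three conclusions back to the metrics $g(t_{k_j})$ by means of the comparability estimate of Theorem \ref{thm: comparability of perimeter V1}, whose constants all tend to $1$.

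First I would extract uniform equivalence constants. Since $M$ is compact and $g(t)\to g(t_0)$ continuously (i.e.\ uniformly on $M$), there exist constants $C_k\ge 1$ with $C_k\to 1$ as $k\to\infty$ and $C_k^{-1}g(t_0)\leqslant g(t_k)\leqslant C_k g(t_0)$ on all of $M$, at least for all $k$ large (discard the finitely many exceptions). Consequently the Riemannian densities satisfy $C_k^{-n/2}\,dV_{g(t_0)}\leqslant dV_{g(t_k)}\leqslant C_k^{n/2}\,dV_{g(t_0)}$, so $C_k^{-n/2}|A|_{g(t_0)}\leqslant |A|_{g(t_k)}\leqslant C_k^{n/2}|A|_{g(t_0)}$ for every measurable $A$, and by Theorem \ref{thm: comparability of perimeter V1}, $C_k^{-(n-1)/2}P_{g(t_0)}(\Omega_k)\leqslant P_{g(t_k)}(\Omega_k)\leqslant C_k^{(n-1)/2}P_{g(t_0)}(\Omega_k)$. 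Combining the last chain with the hypothesis $P_{g(t_k)}(\Omega_k)\leqslant C$ yields $P_{g(t_0)}(\Omega_k)\leqslant C_k^{(n-1)/2}C\leqslant 2C$ for $k$ large; that is, $\Omega_k$ has uniformly bounded perimeter with respect to the single metric $g(t_0)$.

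Next I would apply Lemma \ref{lem: Compactness for sets of finite perimeter} to $\Omega_k$ in $(M,g(t_0))$, obtaining a subsequence $\Omega_{k_j}$ and a set $W$ of finite perimeter with respect to $g(t_0)$ such that $|\Omega_{k_j}\Delta W|_{g(t_0)}\to 0$, $|\Omega_{k_j}|_{g(t_0)}\to |W|_{g(t_0)}$, and $\liminf_j P_{g(t_0)}(\Omega_{k_j})\geqslant P_{g(t_0)}(W)$. Conclusion (1) then follows from $|\Omega_{k_j}\Delta W|_{g(t_{k_j})}\leqslant C_{k_j}^{n/2}|\Omega_{k_j}\Delta W|_{g(t_0)}\to 0$. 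For conclusion (2), the estimate $|\,|\Omega_{k_j}|_{g(t_{k_j})}-|\Omega_{k_j}|_{g(t_0)}\,|\leqslant (C_{k_j}^{n/2}-1)|M|_{g(t_0)}\to 0$ together with $|\Omega_{k_j}|_{g(t_0)}\to |W|_{g(t_0)}$ gives $|\Omega_{k_j}|_{g(t_{k_j})}\to |W|_{g(t_0)}$ by the triangle inequality. For conclusion (3), from $P_{g(t_{k_j})}(\Omega_{k_j})\geqslant C_{k_j}^{-(n-1)/2}P_{g(t_0)}(\Omega_{k_j})$ and $C_{k_j}\to 1$ one has, for each $\delta>0$ and $j$ large, $P_{g(t_{k_j})}(\Omega_{k_j})\geqslant (1-\delta)P_{g(t_0)}(\Omega_{k_j})$, hence $\liminf_j P_{g(t_{k_j})}(\Omega_{k_j})\geqslant (1-\delta)\liminf_j P_{g(t_0)}(\Omega_{k_j})\geqslant (1-\delta)P_{g(t_0)}(W)$; letting $\delta\to 0$ completes the argument.

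The only genuinely delicate point is the first step: reading off the uniform equivalence constants $C_k\to 1$ from the hypothesis that $g(t)$ converges to $g(t_0)$ continuously. This is exactly where compactness of $M$ is essential, since a uniform $C^0$ convergence of the metric tensors pinches the quadratic forms $g(t_k)$ and $g(t_0)$ within $(1+\varepsilon)^{\pm 1}$ of each other uniformly in $x\in M$ once $k$ is large. After that, every other step is a matter of tracking the comparability constants — all converging to $1$ — through Lemma \ref{lem: Compactness for sets of finite perimeter}.
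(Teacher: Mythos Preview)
Your proof is correct and follows essentially the same route as the paper: bound the perimeters uniformly in the fixed metric $g(t_0)$ via Theorem~\ref{thm: comparability of perimeter V1}, apply the fixed-metric compactness Lemma~\ref{lem: Compactness for sets of finite perimeter}, and then transfer each of the three conclusions back using the comparability constants, which tend to $1$. The only cosmetic difference is that you package the metric equivalence as a sequence $C_k\to 1$, whereas the paper writes it in $\varepsilon$-form $(1+\varepsilon)^{\pm 2}$ and lets $\varepsilon\to 0$ at the end; the two formulations are interchangeable.
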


\begin{proof}
Since $g(t_k)\rightarrow{g(t_0)}$ continuously, for any $\epsilon>0$, there exists $N$ such that $k>N$ implies
\begin{equation}\label{eq:metric comparison}
(1+\epsilon)^{-2}g_{t_{0}}\leq g_{t_{k}}\leq (1+\epsilon)^{2}g_{t_{0}}\;\;\;\;\;\text{or}\;\;\;\;\;(1+\epsilon)^{-2}g_{t_{k}}\leq g_{t_{0}}\leq (1+\epsilon)^{2}g_{t_{k}}.
\end{equation}
Then Theorem \ref{thm: comparability of perimeter V1} implies, for $k>N$,
$$
(1+\epsilon)^{-(n-1)}P_{t_{k}}(\Omega_{k})\leq P_{t_{0}}(\Omega_{k})\leq (1+\epsilon)^{n-1}P_{t_{k}}(\Omega_{k}),
$$
which implies that $P_{t_{0}}(\Omega_{k})\leqslant{C}$. Hence we can apply Lemma \ref{lem: Compactness for sets of finite perimeter} to the metric $g(t_0)$ to get a subsequence $\Omega_{k_j}$ and a set $W$ such that
\begin{equation}\label{eq:temp1}
 \;\; |\Omega_{k_j}\Delta W|_{t_0}\rightarrow0 \;\;\text{as}\;\; j\rightarrow\infty,
\end{equation}
\begin{equation}\label{eq:temp2} |\Omega_{k_j}|_{t_0}\longrightarrow |W|_{t_0} \;\;\text{as}\;\; j\rightarrow\infty,
\end{equation}  
and
\begin{equation}\label{eq:temp3}
\underset{j\rightarrow\infty}{\text{liminf\;}}(P_{t_0}(\Omega_{k_j}))\geqslant P_{t_0}(W).
\end{equation}
Note that (\ref{eq:metric comparison}) and Theorem \ref{thm: comparability of perimeter V1} imply that
\begin{equation}\label{eq:temp4}
(1+\epsilon)^{-(n-1)}P_{t_0}(\Omega_{k_j})\leq P_{t_{k_j}}(\Omega_{k_j})\leq (1+\epsilon)^{n-1}P_{t_0}(\Omega_{k_j}),
\end{equation}
\begin{equation}\label{eq:temp5}
(1+\epsilon)^{-n}|\Omega_{k_j}|_{t_{0}}\leq |\Omega_{k_j}|_{t_{k_j}}\leq (1+\epsilon)^{n}|\Omega_{k_j}|_{t_{0}},
\end{equation}
and
\begin{equation}\label{eq:temp6}
(1+\epsilon)^{-n}|\Omega_{k_j}\Delta W|_{t_{0}}\leq |\Omega_{k_j}\Delta W|_{t_{k_j}}\leq (1+\epsilon)^{n}|\Omega_{k_j}\Delta W|_{t_{0}}.
\end{equation}
Now (\ref{eq:temp1}) and (\ref{eq:temp6}) imply
$$
|\Omega_{k_j}\Delta W|_{t_{k_j}}\rightarrow{0},
$$which is the desired property (1).

Letting $j\rightarrow{\infty}$, (\ref{eq:temp2}) and (\ref{eq:temp5}) imply
$$
(1+\epsilon)^{-n}|W|_{t_{0}}\leq \underset{j\rightarrow{\infty}}{\text{liminf\;}}|\Omega_{k_{j}}|_{t_{k_{j}}}\leq \underset{j\rightarrow{\infty}}{\text{limsup\;}}|\Omega_{k_{j}}|_{t_{k_{j}}}\leq(1+\epsilon)^{n}|W|_{t_{0}}.
$$
Now, letting $\epsilon\rightarrow0$ gives
$$
|\Omega_{k_j}|_{t_{k_j}}\longrightarrow |W|_{t_0} \;\;\text{as}\;\; j\rightarrow\infty,
$$which is the desired property (2).

Letting $j\rightarrow{\infty}$, (\ref{eq:temp3}) and (\ref{eq:temp4}) imply
$$
(1+\epsilon)^{-(n-1)}P_{t_{0}}(W)\leq \underset{j\rightarrow{\infty}}{\text{liminf\;}} P_{t_{k_{j}}}(\Omega_{k_{j}}).
$$
Now, letting $\epsilon\rightarrow0$ gives
$$
P_{t_{0}}(W)\leq \underset{j\rightarrow{\infty}}{\text{liminf\;}} P_{t_{k_{j}}}(\Omega_{k_{j}}),
$$which is the desired property (3).
\end{proof}

\section{Proof of the joint continuity of the isoperimetric profile}

\begin{thm}[Section 8.5 and Theorem 10.2 of \cite{Mor}. Theorem V.4.1 of \cite{Cha}. Proposition 2.4 of \cite{RR}]\label{thm: regularity of isoperimetric region}
Given a compact Riemannian manifold $(M^n,g)$, for each $\xi \in (0,1)$, there exists a corresponding isoperimetric region whose boundary, apart from a singular closed set of Hausdorff dimension at most $n-8$, is a smooth embedded orientable hypersurface with constant mean curvature. 
\end{thm}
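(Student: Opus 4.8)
Since this is a classical interior regularity result (the paper rightly quotes Morgan, Chavel, and Ritor\'e--Rosales), the plan is to deduce it from the regularity theory for almost-minimizing boundaries in geometric measure theory, starting from the existence of an isoperimetric region $\Omega$ for $\xi$ (Proposition \ref{prop:Existence of isoperimetric region} above). The first step I would take is to upgrade ``$\Omega$ minimizes perimeter under a global volume constraint'' to ``$\Omega$ is a $(\Lambda,r_0)$-minimizer of perimeter'': there exist $\Lambda\geq 0$ and $r_0>0$, depending only on $(M,g)$ and $\xi$, such that $P_g(\Omega)\leq P_g(\Omega')+\Lambda\,|\Omega\Delta\Omega'|_g$ for every competitor $\Omega'$ with $\Omega\Delta\Omega'\subset\subset B_r(p)$, $r<r_0$. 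To prove this one fixes a small reference ball $B$ disjoint from $B_r(p)$ in which any prescribed small volume change can be realized by flowing $\Omega$ along a compactly supported vector field; the perimeter cost of restoring the enclosed volume to $\xi|M|_g$ is controlled linearly by the volume defect $|\Omega\Delta\Omega'|_g$, and substituting the corrected competitor into the minimality of $\Omega$ gives the stated inequality.

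Next I would localize: in geodesic normal coordinates on a small ball the Riemannian perimeter and volume functionals are comparable to the Euclidean ones with constants tending to $1$ as the radius shrinks, so the reduced boundary $\partial^*\Omega$ is, in charts, a $\Lambda$-minimizer of Euclidean perimeter in the sense of De Giorgi. Then I would invoke the classical regularity package (De Giorgi, Almgren, Tamanini, Gonz\'alez--Massari--Tamanini): the monotonicity formula and the associated $\varepsilon$-regularity theorem show that $\partial^*\Omega$ is a $C^{1,\alpha}$ embedded hypersurface; since $g$ is smooth, the Euler--Lagrange equation (prescribed mean curvature) together with Schauder estimates bootstraps this to $C^\infty$. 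For the singular set $\Sigma=\partial\Omega\setminus\partial^*\Omega$ I would run Federer's dimension-reduction argument: at a singular point the rescalings of $\Omega$ converge (the $\Lambda$-term scales away) to a tangent cone that is area-minimizing in $\mathbb{R}^n$, and by Simons' theorem there is no nonplanar area-minimizing hypercone in $\mathbb{R}^k$ for $k\leq 7$; hence $\dim_{\mathcal H}\Sigma\leq n-8$.

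Finally, constant mean curvature and orientability: because $\Omega$ minimizes perimeter among sets enclosing the \emph{same} volume, the first variation of perimeter along a normal field $\phi\,\nu$ must vanish whenever the induced first variation of volume $\int_{\partial^*\Omega}\phi\,d\mathcal H^{n-1}$ vanishes, which is exactly the Lagrange-multiplier condition and forces the mean curvature $H$ of the smooth part of $\partial\Omega$ to equal the (constant) multiplier. Orientability is automatic since $\partial^*\Omega$ carries the outward unit normal of $\Omega$, so the regular part is a two-sided embedded hypersurface; embeddedness and orientability are built into the fact that $\partial^*\Omega$ is the measure-theoretic boundary of a set.

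The main obstacle in a from-scratch treatment is the regularity/singular-set package itself: proving the monotonicity formula and the $\varepsilon$-regularity theorem for $\Lambda$-minimizers in the curved ambient setting, and then carrying out the dimension reduction, which relies essentially on Simons' classification of low-dimensional minimizing cones to obtain the sharp exponent $n-8$ rather than a weaker bound. All of this is standard and is precisely what the cited references supply, so within this paper one simply quotes the statement.
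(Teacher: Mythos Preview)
Your outline is correct and matches what the cited references contain; the paper itself gives no proof of this theorem but simply quotes it from Morgan, Chavel, and Ritor\'e--Rosales, exactly as you anticipate in your final sentence. There is nothing to compare beyond that.
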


\begin{proof}[\textbf{Proof of Theorem \ref{thm: joint cty of h}.}]
Fix any $t_0$ and $\xi_0$. By rescaling, we may assume $|M|_{g(t_0)}={1}$. Fix an isoperimetric region $\Omega_0$ for $(g(t_0),\xi_0)$. Let $(t_i,\xi_i)$ be a sequence converging to ${(t_0,\xi_0)}$. Let each $\Omega_{i}$ be an isoperimetric region for $(g(t_{i}),\xi_{i})$. Then
$$
|\Omega_{i}|_{t_{i}}=\xi_{i}|M|_{t_i}\;\;\text{and}\;\;|\partial\Omega_{i}|_{t_{i}}=h_{g(t_{i})}(\xi_{i}),
$$ where $|\cdot|_{t_i}$ is the boundary area or volume measurement with respect to the metric $g(t_i)$.

Denote $L_i=h_{g(t_{i})}(\xi_{i})$ and $L_0=h_{g(t_{0})}(\xi_{0})$. We need to show $L_i\rightarrow{L_0}$.

Denote $\mathring{L_{i}}=|\partial\Omega_{i}|_{t_{0}}$, $\mathring{\xi_{i}}=|\Omega_{i}|_{t_{0}}$, $L_{0,i}=|\partial\Omega_{0}|_{t_{i}}$ and $\xi_{0,i}=|\Omega_{0}|_{t_{i}}/|M|_{t_i}$. Since $|M|_{t_i}\rightarrow{|M|_{t_0}=1}$, it follows that
$$
L_{0,i}\rightarrow{L_0}\;\;\text{and}\;\;\xi_{0,i}\rightarrow{\xi_0}.
$$

\noindent {\bf Claim 1:} ${\text{limsup\;}}(L_i)\leqslant L_0$. 

\noindent {Proof of Claim 1.}

Note that, by Theorem \ref{thm: regularity of isoperimetric region}, $\partial \Omega_0$ is not dense in $M$. Then fixing any small enough $\epsilon>0$, we can do the following:
\begin{enumerate}
    \item pick a point $p\in{M}$ and a geodesic ball $B_{0}(p)$ centered at p for $(M, g(t_0))$ such that $B_{0}(p)$ is disjoint from $\Omega_0$ and $|B_{0}(p)|_{t_0}=2\epsilon$. Fix such p.
    \item pick a point $q\in{\Omega_0}$ and a geodesic ball $B_{0}(q)$ centered at q for $(M, g(t_0))$ such that $B_{0}(q)$ is fully contained in the interior of $\Omega_0$ and $|B_{0}(q)|_{t_0}=2\epsilon$. Fix such q.
\end{enumerate}

Because $(\xi_{0,i}-\xi_i)|M|_{t_i}=((\xi_{0,i}-\xi_0)+(\xi_0-\xi_i))|M|_{t_i}\rightarrow{0}$, for the above $\epsilon$, there exists a big enough number $N_{1}>0$ such that $i>N_{1}$ implies  $|\xi_{0,i}-\xi_i||M|_{t_i}<\epsilon$. On the other hand, convergence of metrics implies that there exist a big enough number $N_2>0$ such that we can construct a sequence of comparison regions $\Omega_0(B_i)$ in the following way: denote $\text{max}\;\{N_1,N_2\}$ by $N$,
\begin{enumerate}
    \item if $i<N$, then let $\Omega_0(B_i)=\Omega_i$.
    \item if $i\geq{N}$ and $\xi_i>\xi_{0,i}$, then there exists a geodesic ball $B_i(p)\subseteq{B_{0}(p)}$ centered at p with area $|B_i(p)|_{t_i}=(\xi_i-\xi_{0,i})|M|_{t_i}<\epsilon$ (using convergence of metrics). Let  $\Omega_0(B_i)=\Omega_0\cup{B_i(p)}$.
    \item if $i\geq{N}$ and $\xi_i<\xi_{0,i}$, then there exists be a geodesic ball $B_i(q)\subseteq{B_{0}(q)}$ centered at q with area $|B_i(q)|_{t_i}=(\xi_{0,i}-\xi_i)|M|_{t_i}<\epsilon$. Let  $\Omega_0(B_i)=\Omega_0\backslash{B_i(q)}$.
    \item if $i\geq{N}$ and $\xi_i=\xi_{0,i}$, then let $\Omega_0(B_i)=\Omega_0$.
\end{enumerate}

By construction, comparison region $\Omega_0(B_i)$ satisfies
$$
|B_i|_{t_i}=|\xi_{0,i}-\xi_i||M|_{t_i}\;\;\text{and}\;\;|\Omega_0(B_i)|_{t_i}=\xi_i|M|_{t_i}\;\;\text{for}\;\text{all}\;i>N,
$$
where $B_i$ are the above geodesic balls centered either at p or at q.

Since $\Omega_{i}$ is an isoperimetric region for $(g(t_{i}),\xi_{i})$ and $|\Omega_{i}|_{t_i}=|\Omega_0(B_i)|_{t_i}$, we have $|\partial\Omega_i|_{t_i}\leqslant{|\partial\Omega_0(B_i)|_{t_i}}$, that is
$$
L_i\leqslant{L_{0,i}+|\partial B_i|_{t_i}}\;\;\text{for}\;\text{all}\;i>N.
$$ Then taking limsup gives the desired inequality $\text{limsup\;}(L_i)\leqslant{L_0}$ because $|\partial B_i|_{t_i}\rightarrow{0}$.\newline

\noindent {\bf Claim 2:} ${\text{liminf\;}}(L_i)\geqslant L_0$.

\noindent {Proof of Claim 2.}
Suppose for contradiction that ${\text{liminf\;}}(L_i)<L_0$. Then there exists a subsequence $\Omega_{i_j}$ such that $\underset{j\rightarrow\infty}{\text{lim}\;}L_{i_{j}}<L_{0}$, which implies $|\partial \Omega_{i_j}|=L_{i_j}\leq C$. Hence Theorem \ref{lem:Compactness for sets of finite perimeter with changing metrics} implies that there is a further subsequence, still denoted by $\Omega_{i_j}$, and a set W of finite perimeter such that 
$$
|W|_{t_0}=\xi_0\;\;\text{and}\;\; \underset{j\rightarrow\infty}{\text{liminf\;}}|\partial\Omega_{i_j}|_{t_{i_j}}\geq{P(W)_{t_0}}.
$$
Therefore $\underset{j\rightarrow\infty}{\text{lim}\;}(L_{i_j})=\underset{j\rightarrow\infty}{\text{liminf\;}}|\partial\Omega_{i_j}|_{t_{i_j}}\geq{P(W)_{t_0}}\geq|\partial\Omega_0|_{t_0}={L_0}$. This gives a contradiction.
\end{proof}

\begin{rmk}
In the proof of claim 1, to show $\partial \Omega_0$ is not dense in $M$, we may alternatively use the so-called ``density estimates" for isoperimetric region $\Omega_0$, which is much lighter than the machinery of Theorem \ref{thm: regularity of isoperimetric region}. See more details in section 16.2 of \cite{Mag}.
\end{rmk}

\section{Uniform asymptotic behavior of the isoperimetric Profile}

The normalized Ricci flow on a compact Riemannian surface $(M,g_0)$ is the following nonlinear evolution equation for metrics 
\begin{equation}\label{eq: normalized RF on surfaces}
\begin{cases}
\frac{\partial g}{\partial t}=-2(K-\overline{K})g
\\ g(0)=g(t_0)
\end{cases}\;\;\;\;t\geqslant{0},
\end{equation}where $K$ is the Gauss curvature of $M$ and $\overline{K}=\frac{1}{|M|}\intop_{M}{Kd\mu(g)}$ is the average curvature.

We will make use of the following standard results of long time existence and uniqueness for normalized Ricci flow on compact surfaces.

\begin{thm}[\cite{Ham}]\label{thm: long time existence for NRF on surfaces}
Given a compact Riemannian surface $(M^2,g_0)$. If $\overline{K}_{g_0}\leqslant{0}$ or $K_{g_0}(p)\geqslant{0}$ for all $p\in M$, then the solution to (\ref{eq: normalized RF on surfaces}) exists for all time $t\geqslant{0}$ and converges smoothly to a metric of constant curvature.
\end{thm}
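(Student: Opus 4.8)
The plan is to follow Hamilton's analysis of the Ricci flow on surfaces. The first observation is that $\partial_t g=-2(K-\overline K)g$ keeps the conformal class of $g_0$ fixed, so writing $g(t)=e^{2\varphi(t)}g_0$ turns (\ref{eq: normalized RF on surfaces}) into a single scalar equation $\partial_t\varphi=e^{-2\varphi}\bigl(\Delta_{g_0}\varphi-K_{g_0}\bigr)+\overline K$, which is uniformly parabolic as long as $\varphi$ stays bounded; standard parabolic theory then yields a unique smooth solution on a maximal interval $[0,T_{\max})$. Two conservation laws are immediate: $|M|_{g(t)}$ is constant in $t$ (since $\int_M(K-\overline K)\,dV_{g(t)}=0$ by the definition of $\overline K$), and hence $\overline K=2\pi\chi(M)/|M|_{g(t)}$ is constant in $t$ by Gauss--Bonnet. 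Differentiating the flow gives the reaction--diffusion equation $\partial_t K=\Delta K+2K(K-\overline K)$, which drives all the maximum-principle estimates below. The theorem then reduces to (I) a uniform bound $\sup_{M\times[0,T_{\max})}|K|<\infty$ --- which by the parabolic theory forces $T_{\max}=\infty$ --- together with (II) smooth convergence of $g(t)$ as $t\to\infty$.

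For (I) the sign of $\overline K$ (equivalently of $\chi$) governs the difficulty. When $\overline K<0$, the maximum principle applied to $\partial_t K=\Delta K+2K(K-\overline K)$, compared with the ODE $\dot\phi=2\phi(\phi-\overline K)$, gives the lower bound $K\ge\min\{\min_M K_{g_0},\overline K\}$; the naive comparison for an upper bound blows up, so one passes to Hamilton's potential $f$, the solution of $\Delta f=K-\overline K$ with $\partial_t f$ suitably normalized (possible precisely because $\int_M(K-\overline K)=0$), and studies $H:=(K-\overline K)+|\nabla f|^2$. Hamilton's identity $(\partial_t-\Delta)H=2\overline K H-2\bigl|\nabla^2 f-\tfrac12(\Delta f)g\bigr|^2\le 2\overline K H$ gives $\max_M H\le(\max_M H)|_{t=0}\,e^{2\overline K t}$, so $K-\overline K\le H$ decays exponentially; with the lower bound this makes $|K|$ uniformly bounded, and a companion argument for $K_{\min}$ upgrades it to $K\to\overline K$ exponentially. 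When $\overline K=0$ (so $M$ is a torus or Klein bottle), the same identity gives $(\partial_t-\Delta)H\le 0$, so $H$ and hence $|K|$ stay bounded --- enough for long-time existence --- while the slower (polynomial, then exponential near the flat fixed point) decay of $K$ is extracted with more care from the potential-function identities.

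The remaining hypothesis is $K_{g_0}\ge 0$, so $\chi\ge 0$ and $M$ is $S^2$, $\mathbb{RP}^2$, $T^2$ or the Klein bottle; on the last two $\int_M K=0$ forces $K\equiv 0$ and the flow is stationary, and $\mathbb{RP}^2$ lifts equivariantly to $S^2$, so the substantive case is $S^2$ --- note that the hypothesis deliberately excludes only $S^2$ with curvature of variable sign, which would need Chow's additional argument. On $S^2$ one first shows $K>0$ for all $t>0$: the maximum principle preserves $K\ge 0$, and the strong maximum principle upgrades this to strict positivity since $\int_M K=4\pi>0$ rules out $K\equiv 0$. The genuinely hard step --- and, I expect, the main obstacle of the whole proof --- is the a priori upper bound on $\max_M K$: now $\overline K>0$, so the reaction term $2\overline K H$ in the $H$-equation has the unfavourable sign and the previous paragraph's argument fails. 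I would close this with Hamilton's two further tools: the entropy $N(t):=\int_M K\log K\,dV_{g(t)}$, which Hamilton proves is monotone non-increasing along the flow, so that $N(t)\le N(0)$ and, combined with the topologically fixed total curvature $\int_M K=4\pi$, gives a uniform non-collapsing estimate for $(M,g(t))$; and Hamilton's differential Harnack inequality for positively curved surface solutions, which together with this non-collapsing rules out localized concentrations of curvature and hence yields the desired uniform upper bound $K\le C(g_0)$. This establishes $T_{\max}=\infty$ in every case covered by the hypothesis.

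Finally, for (II): once $|K|$ is uniformly bounded on $[0,\infty)$, standard interior parabolic estimates bound all space--time derivatives of $K$ and of $\varphi$ uniformly in $t$ (on $S^2$ the non-collapsing above also keeps $\varphi$ from degenerating), so the flow is smoothly precompact. Convergence to a constant-curvature metric then follows: for $\overline K\ne 0$ the exponential decay of $K-\overline K$ (from the second paragraph, or, in the $K\ge 0$ case, from the entropy identity together with interpolation) makes $\int_t^\infty\|\partial_s g\|_{C^k(M)}\,ds\to 0$ for every $k$, so $g(t)$ converges in $C^\infty$ to a metric of constant curvature; for $\overline K=0$ one first obtains $\int_0^\infty\!\!\int_M K^2\,dV_{g(t)}\,dt<\infty$, hence subconvergence to a flat metric, and then bootstraps to exponential convergence near that fixed point. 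On $S^2$ one must additionally quotient by the non-compact conformal automorphism group, which is exactly what the non-collapsing controls. The step I expect to be by far the least routine is the upper curvature bound on $S^2$ via the entropy and Harnack estimates.
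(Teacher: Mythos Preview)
The paper does not prove this theorem at all: it is stated with the citation \cite{Ham} and used as a black-box input (together with Chow's companion result, Theorem~\ref{thm: thm: long time existence for NRF on S^2}) to justify long-time existence and convergence of the normalized Ricci flow on compact surfaces. There is therefore no ``paper's own proof'' to compare your proposal against.

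That said, your outline is a faithful and largely accurate summary of Hamilton's original argument in \cite{Ham}: the reduction to a scalar parabolic equation in the conformal class, the evolution $\partial_t K=\Delta K+2K(K-\overline K)$, the potential function $f$ with $\Delta f=K-\overline K$, the quantity $H=(K-\overline K)+|\nabla f|^2$ and its differential inequality, the case split on the sign of $\overline K$, and---in the positive case on $S^2$---the surface entropy and the differential Harnack inequality to control $K_{\max}$. Your identification of the $S^2$ upper curvature bound as the crux is correct. One minor caution: in the $\overline K<0$ case you write that the ``companion argument for $K_{\min}$ upgrades it to $K\to\overline K$ exponentially,'' but in Hamilton's treatment the exponential lower bound $K-\overline K\ge -Ce^{2\overline K t}$ comes directly from the ODE comparison for $K_{\min}$, not from a separate $H$-type quantity; this is a small inaccuracy in attribution rather than a gap. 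Overall your sketch would serve as a reasonable roadmap for reconstructing Hamilton's proof, but for the purposes of this paper the theorem is simply quoted.
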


\begin{thm}[\cite{Cho}]\label{thm: thm: long time existence for NRF on S^2}
If $g_0$ is any metric on $S^2$, then its evolution under normalized Ricci flow develops positive Gauss curvature in finite time, and thus by Theorem \ref{thm: long time existence for NRF on surfaces} converges smoothly to the round metric as $t\rightarrow{\infty}$.
\end{thm}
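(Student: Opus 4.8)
This is Chow's theorem \cite{Cho}, and I would arrange matters so that the only case requiring genuine work is an arbitrary metric $g_0$ on $S^2$, since on every other compact surface the sign hypothesis of Theorem \ref{thm: long time existence for NRF on surfaces} is already met. The first step is a reduction. Along (\ref{eq: normalized RF on surfaces}) one has $\partial_t(d\mu_g)=-2(K-\overline K)\,d\mu_g$, so $|M|_g$ is constant and therefore $\overline K\equiv 4\pi/|M|_{g_0}>0$ is a fixed positive constant along the flow; set $R=2K$ (the scalar curvature) and $r=\overline R=2\overline K$. I would then show that it suffices to produce a single time $t_1$ at which $K\ge0$ on all of $M$: restarting the flow from $g(t_1)$, Theorem \ref{thm: long time existence for NRF on surfaces} gives long-time existence and smooth convergence to a metric of constant (hence positive) curvature, which by uniqueness is the continuation of the original flow. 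So the whole problem is to create such a time.

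The tool I would use is Hamilton's potential function $f=f(\cdot,t)$, defined by $\Delta_{g(t)}f=R-r$ and normalized by $\int_M f\,d\mu_g=0$; it is smooth in $(x,t)$ and satisfies $\partial_t f=\Delta f+rf$ modulo a spatial constant. Let $M_{ij}:=\nabla_i\nabla_j f-\tfrac12(\Delta f)g_{ij}$ be its trace-free Hessian, which vanishes identically exactly when $g(t)$ is a gradient Ricci soliton. Using $\partial_tR=\Delta R+R(R-r)$ together with the computation
\begin{equation*}
\partial_t|\nabla f|^2=\Delta|\nabla f|^2-2|M_{ij}|^2-(R-r)^2+r|\nabla f|^2 ,\qquad \nabla_j R=2\nabla^iM_{ij}-R\nabla_j f ,
\end{equation*}
one finds, for a fixed constant $c>0$ and $W:=R-c|\nabla f|^2$,
\begin{equation*}
\partial_tW=\Delta W+R(R-2r)+c(R-r)^2+2c|M_{ij}|^2+rW .
\end{equation*}
The geometric input I would invoke is that the soliton defect decays: following Hamilton --- and verifying, as Chow does, that no lower curvature bound enters here --- one proves $\sup_M|M_{ij}|^2\to0$ as $t\to\infty$, via a differential inequality for $\int_M|M_{ij}|^2\,d\mu_g$ whose $L^2$ decay is promoted to a uniform bound; combined with the two displayed identities and $\int_M f\,d\mu_g=0$ this yields $\sup_M|\nabla f|^2\to0$ as well.

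Granting this, a maximum-principle argument for $W$ finishes the proof. At a spatial minimum of $W$, using $\Delta W\ge0$, $2c|M_{ij}|^2\ge0$, $R=W+c|\nabla f|^2$, and the identity $R(R-2r)+c(R-r)^2=(1+c)(R-r)^2-r^2$, one obtains $\tfrac{d}{dt}W_{\min}\ge(1+c)(R_\ast-r)^2-r^2+rW_{\min}$, where $R_\ast$ is the value of $R$ at the minimum point. Since $R_\ast$ differs from $W_{\min}$ only by the uniformly small quantity $c|\nabla f|^2$, for all large $t$ and all $W_{\min}\in[-\delta,0]$ (with $\delta$ small) the right-hand side exceeds a fixed positive fraction of $cr^2$, its value at $R_\ast=W_{\min}=0$. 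Hence for large $t$ the function $W_{\min}$ is nondecreasing with derivative bounded away from $0$ on $[-\delta,0]$, so it reaches $0$ at a finite time $t_1$ and keeps increasing through it; then $K=\tfrac12 R\ge\tfrac12 W\ge0$ on $M$ at $t_1$, with $K>0$ on $M$ for $t$ slightly larger --- the finite-time positivity claimed --- and the reduction of the first paragraph concludes.

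The step I expect to be the main obstacle is precisely $\sup_M|M_{ij}|^2\to0$ (equivalently $\sup_M|\nabla f|^2\to0$) in the absence of a positive lower curvature bound: one must run Hamilton's integral estimate for $\int_M|M_{ij}|^2\,d\mu_g$, control the curvature terms it produces, and upgrade $L^2$ decay to uniform decay by a parabolic (Moser-type) iteration. This is the technical heart of \cite{Cho}, which I would follow for it; everything else is the maximum-principle bookkeeping above.
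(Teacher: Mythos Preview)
The paper does not prove this statement: Theorem \ref{thm: thm: long time existence for NRF on S^2} is simply quoted from \cite{Cho} as a background result (alongside Hamilton's Theorem \ref{thm: long time existence for NRF on surfaces}), with no argument supplied. There is therefore no ``paper's own proof'' to compare your proposal against.

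Regarding the proposal itself: the reduction to producing a time with $K\ge 0$, the evolution equation you write for $W=R-c|\nabla f|^2$, and the closing maximum-principle step are all correct. In particular, once $\sup_M|\nabla f|^2$ is uniformly small one has $(1+c)(R_\ast-r)^2-r^2+rW_{\min}\ge cr^2-o(1)$ whenever $W_{\min}\le 0$ (the polynomial $(1+c)(x-r)^2-r^2+rx$ is convex with minimum on $\{x\le 0\}$ at $x=0$, where it equals $cr^2$), so $W_{\min}$ is driven nonnegative in finite time. The step you correctly flag as the obstacle---uniform decay of $|M_{ij}|^2$ (equivalently $|\nabla f|^2$) with no sign assumption on $K$---is indeed the whole difficulty, but be aware that Chow's route in \cite{Cho} is through a modified entropy/Harnack estimate rather than by first establishing $\sup_M|M_{ij}|^2\to 0$; Hamilton's argument for the latter uses $R>0$, which is precisely what you are trying to obtain. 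If you intend to write this out in full, you should follow \cite{Cho} for that step rather than the order you sketch.
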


Given any compact Riemannian surface $(M,g)$, we will see in next section, $h^{2}(\xi)+(\underset{(M,g)}{\text{inf}\;}K )(\xi|M|)^2$ is concave in the variable $\xi \in (0,1)$ (by Lemma \ref{lem: Lipschitz cty of h on surfaces}). Then we know the derivative of this concave function is ``biggest" when $\xi\rightarrow{0^+}$ and ``smallest" when $\xi\rightarrow{1^-}$. So if we know the asymptotic behavior of $h(\xi)$ near two endpoints, then we may get some useful uniform bound for $h'(\xi)$. Before exploring that, it is useful to observe that $h(\xi)=h(1-\xi)$; namely, $h(\xi)$ is symmetric about $\xi=\frac{1}{2}$, so we just need to know its asymptotic behavior as $\xi \rightarrow{0^+}$. Moreover, because we will evolve metrics $g(t)$ in our applications, what we actually need is the uniform asymptotic behavior of $h_{g(t)}(\xi)$ as $\xi \rightarrow{0^+}$ for all $t\in [0,T]$. Such asymptotic behavior for a fixed metric has been examined by Andrews and Bryan in \cite{AB}. What we prove in the following is the metric-varying version of their theorem.

\begin{lem}[Theorem 4.3 of \cite{OSS}. Isoperimetric inequality on surfaces of variable curvature]\label{lem:Isoperimetric inequality on surfaces} Let $M$ be a Riemannian surface of variable Gauss curvature $K$ and let $D\subseteq M$ be a simply connected domain with finite area $A$ and denote $|\partial D|$ by $L$. Then
$$
L^2\geqslant{4\pi A-(\underset{D}{\text{sup}\;}K)A^{2}}.
$$
\end{lem}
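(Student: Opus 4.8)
The plan is to prove this by the classical Bol--Fiala sweep-out of $D$ by inner parallel curves, combined with Gauss--Bonnet, reduced to a single first-order differential inequality for an ``isoperimetric deficit'' functional. Set $\rho(x):=\mathrm{dist}_M(x,\partial D)$ for $x\in\overline{D}$, let $D_t:=\{x\in D:\rho(x)>t\}$, and write $A(t):=|D_t|_g$ and $L(t):=\mathcal{H}^1(\rho^{-1}(t))$. Since $|\nabla\rho|=1$ almost everywhere, the coarea formula gives $A(t)=\int_t^{\infty}L(s)\,ds$, so $A$ is Lipschitz and nonincreasing with $A(0)=A$, $A'(t)=-L(t)$ for a.e.\ $t$, and $A(t)\to 0$, $L(t)\to 0$ as $t\to t_{\max}:=\sup_{\overline{D}}\rho$. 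Likewise $L(0)=L$, modulo the (measure-zero) exceptional points of $\rho$ on $\partial D$.

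The analytic heart of the argument is to show that $L$ is of bounded variation on $(0,t_{\max})$ and that, for a.e.\ $t$,
$$
L'(t)\;\le\;2\pi\,\chi(D_t)-\int_{D_t}K\,dV_g\;\le\;-2\pi+\kappa\,A(t),\qquad \kappa:=\sup_{D}K,
$$
where $\chi$ denotes the Euler characteristic. The first inequality is the first-variation-of-length identity $\tfrac{d}{dt}L=-\int_{\partial D_t}k_g\,ds$ for parallel curves together with Gauss--Bonnet $\int_{\partial D_t}k_g\,ds=2\pi\chi(D_t)-\int_{D_t}K\,dV_g$, with the cut-locus singularities of $\rho$ contributing only nonpositively to $L'$; the second uses that every connected component of $D_t$ is an inner parallel region of the simply connected $D$, hence simply connected, so $\chi(D_t)\ge 1$, together with $K\le\kappa$ pointwise so that $\int_{D_t}K\,dV_g\le\kappa A(t)$.

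Granting that estimate, the conclusion follows from a one-line ODE comparison. Define $F(t):=L(t)^2-4\pi A(t)+\kappa A(t)^2$, an absolutely continuous function on $[0,t_{\max})$. Using $A'=-L$ and the bound above,
$$
F'(t)=2L(t)L'(t)+4\pi L(t)-2\kappa A(t)L(t)=2L(t)\bigl(L'(t)+2\pi-\kappa A(t)\bigr)\le 0
$$
for a.e.\ $t$, so $F$ is nonincreasing. Since $A(t),L(t)\to 0$ as $t\to t_{\max}^-$, we get $\lim_{t\to t_{\max}^-}F(t)=0$, hence $F(0)\ge 0$, i.e.\ $L^2\ge 4\pi A-\kappa A^2$, which is the assertion.

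The main obstacle is the second paragraph: rigorously controlling the inner parallel curves and the level sets of the distance function $\rho$ near the cut locus, i.e.\ establishing the a.e.\ differential inequality for $L(t)$ and the bounded-variation/absolute-continuity claims used above. This is exactly the content of Fiala's and Hartman's structural analyses of parallel curves on surfaces; in practice one reduces to the nicer case by first approximating $\partial D$ by real-analytic curves (or smoothing the metric) and then passing to the limit, using that the inequality to be proved is stable under $C^0$-convergence of the data. Everything else -- the coarea identity, Gauss--Bonnet, and the final ODE computation -- is routine.
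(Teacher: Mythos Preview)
The paper does not supply a proof of this lemma; it is quoted verbatim as Theorem~4.3 of Osserman's survey \cite{OSS} and used as a black box. Your Bol--Fiala inner-parallel sweep is exactly the classical argument Osserman presents there (attributing it to Bol and Fiala, with Hartman's analysis of the cut locus), so in that sense your approach coincides with the cited source.

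One cosmetic slip: the displayed chain $L'(t)\le 2\pi\,\chi(D_t)-\int_{D_t}K\le -2\pi+\kappa A(t)$ has the sign of the middle expression reversed. From your own identities $L'=-\int_{\partial D_t}k_g$ and $\int_{\partial D_t}k_g=2\pi\chi(D_t)-\int_{D_t}K$ one obtains
\[
L'(t)\;\le\;\int_{D_t}K-2\pi\,\chi(D_t)\;\le\;\kappa A(t)-2\pi,
\]
which is the inequality you actually feed into the computation of $F'(t)$; the ODE step and the conclusion $F(0)\ge 0$ are unaffected. You have also correctly isolated the one nontrivial point---the regularity of the level sets of $\rho$ across the cut locus and the resulting a.e.\ differential inequality for $L$---and named the standard remedies (Hartman's structure theorem or analytic approximation).
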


\begin{thm}\label{thm: error coeff control of h(g(t))}
Let $g(t)$ be a family of Riemannian metrics on a compact surface $M$ that vary smoothly over $[0,T]$ for some $0\leqslant T \leqslant +\infty$. Then the isoperimetric profiles satisfy
$$
h_{g(t)}(\xi)=\sqrt{4\pi|M|_{g(t)}\xi}-\frac{|M|_{g(t)}^{3/2}\text{sup}_{M}K}{4\sqrt{\pi}}\xi^{3/2}+O(\xi^{2})\qquad as\;\xi\rightarrow0.
$$
Moreover, 

(1) The coefficient $C(t)$ in the error term $O(\xi^2)$ is bounded over $[0,T]$ by a constant $C(T)$. 

(2) If $g(t)$ evolves under the normalized Ricci flow using $g_0=g(0)$ as the initial metric, then $C(t)$ is bounded over $[0,+\infty]$ by a uniform constant $C_0$ depending only on $|M|_{g_0}$, $\underset{(M,g_0)}{\text{sup}\;}K$ and $\underset{(M,g_0)}{\text{sup}\;}\Delta K$.
\end{thm}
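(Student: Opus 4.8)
The plan is to prove the asymptotic expansion by a quantitative two-sided estimate that isolates exactly which geometric data enter the error term, and then to feed in the special structure of the normalized Ricci flow to bound those data uniformly in $t$. For the expansion itself I would re-run the argument of Andrews--Bryan \cite{AB}, tracking constants. For the upper bound, fix a point $p_{t}\in M$ where $K_{g(t)}$ attains its maximum and take as a competitor the geodesic ball $B_{r(\xi)}(p_{t})$ of the radius $r(\xi)$ determined by $|B_{r(\xi)}(p_{t})|_{g(t)}=\xi|M|_{g(t)}$, so that $h_{g(t)}(\xi)^{2}\leqslant|\partial B_{r(\xi)}(p_{t})|_{g(t)}^{2}$. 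Expanding the length and area of small geodesic circles in $r$ (the contributions linear in $\nabla K$ integrate to zero around the circle, so the first correction beyond the leading term involves only $K(p_{t})^{2}$ and $\Delta K(p_{t})$) and inverting $|B_{r(\xi)}(p_{t})|_{g(t)}=\xi|M|_{g(t)}$, one obtains
$$
h_{g(t)}(\xi)^{2}\ \leqslant\ 4\pi|M|_{g(t)}\,\xi-\big(\sup_{M}K_{g(t)}\big)\big(|M|_{g(t)}\,\xi\big)^{2}+E_{t}(\xi),\qquad 0\leqslant E_{t}(\xi)\leqslant C_{t}\big(|M|_{g(t)}\,\xi\big)^{3},
$$
where $C_{t}$ is an explicit multiple of $\big(\sup_{M}|K_{g(t)}|\big)^{2}$ plus a multiple of a bound for $\Delta K_{g(t)}$ near $p_{t}$, and this holds once $\xi$ is small in terms of the same data together with a lower bound for $\operatorname{inj}(M,g(t))$. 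For the matching lower bound, for $\xi$ small the isoperimetric region $\Omega_{\xi}$ for $(g(t),\xi)$ is a smooth topological disk: by Theorem~\ref{thm: regularity of isoperimetric region} (in dimension $2$ the singular set is empty) its boundary is a smooth closed curve; it is connected since a single component would already be a strictly better competitor; and, having small diameter by density/monotonicity estimates (cf.\ \cite{Mag}), it lies in a small geodesic ball, so, since subsurfaces of a disk are planar and any interior hole could be filled and the area restored by trimming a thin collar along the outer boundary (strictly lowering the perimeter and contradicting minimality), $\Omega_{\xi}$ must be a disk. Lemma~\ref{lem:Isoperimetric inequality on surfaces} then applies to $\Omega_{\xi}$ and yields $h_{g(t)}(\xi)^{2}=|\partial\Omega_{\xi}|_{g(t)}^{2}\geqslant 4\pi|\Omega_{\xi}|_{g(t)}-(\sup_{\Omega_{\xi}}K_{g(t)})|\Omega_{\xi}|_{g(t)}^{2}\geqslant 4\pi|M|_{g(t)}\xi-(\sup_{M}K_{g(t)})(|M|_{g(t)}\xi)^{2}$. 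Sandwiching the two bounds and taking square roots gives the stated expansion (indeed with error $O(\xi^{5/2})$, which is absorbed into $O(\xi^{2})$) and exhibits the error coefficient $C(t)$ as an explicit function of $|M|_{g(t)}$, $\sup_{M}|K_{g(t)}|$ and a bound for $\Delta K_{g(t)}$, uniform for $\xi$ below a threshold governed by these quantities and $\operatorname{inj}(M,g(t))$.

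Part (1) is then immediate: on a compact interval $[0,T]$ with $g(t)$ varying smoothly, $|M|_{g(t)}$, $\sup_{M}|K_{g(t)}|$, $\sup_{M}|\Delta K_{g(t)}|$ and $\operatorname{inj}(M,g(t))^{-1}$ are continuous in $t$, hence bounded, so both the smallness threshold and the error constant may be chosen independent of $t\in[0,T]$. For Part (2) I would use three features of the normalized Ricci flow on a compact surface. First, the area is preserved: $\frac{d}{dt}|M|_{g(t)}=-2\intop_{M}(K-\overline{K})\,dV_{g(t)}=0$, and $\overline{K}=2\pi\chi(M)/|M|_{g_{0}}$ is constant by Gauss--Bonnet, so $|M|_{g(t)}\equiv|M|_{g_{0}}$, which is why only $|M|_{g_{0}}$ appears. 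Second, by Theorem~\ref{thm: long time existence for NRF on surfaces} and Theorem~\ref{thm: thm: long time existence for NRF on S^2} the flow exists for all $t\geqslant 0$ and converges smoothly to a constant-curvature metric; hence every geometric quantity entering the error constant, including $\operatorname{inj}(M,g(t))$ and the $\xi$-threshold, is continuous on $[0,\infty)$ and has a limit as $t\to\infty$, so it is bounded over $[0,\infty]$. Third, to make these bounds depend only on the initial data in the statement, apply the maximum principle to the curvature evolution: from $\partial_{t}K=\Delta K+2K(K-\overline{K})$ and ODE comparison one gets $\sup_{M}K_{g(t)}\leqslant C(\sup_{M}K_{g_{0}},\overline{K})$, and likewise $\inf_{M}K_{g(t)}$ is controlled, so $\sup_{M}|K_{g(t)}|$ is; differentiating that equation and using $\partial_{t}\Delta_{g(t)}=2(K-\overline{K})\Delta_{g(t)}$ (valid on a surface) gives a parabolic equation for $\Delta K$ from which $\sup_{M}\Delta K_{g(t)}$ is bounded in terms of $\sup_{M}\Delta K_{g_{0}}$ and the already-controlled curvature, the gradient term absorbed via the interior derivative estimates those curvature bounds provide. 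Substituting into the error constant yields $C_{0}=C_{0}(|M|_{g_{0}},\sup_{M}K_{g_{0}},\sup_{M}\Delta K_{g_{0}})$.

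The hard part is the uniform-in-time control, where two points need care. First, one must ensure the ``$\xi$ small enough'' threshold in the expansion (equivalently, a uniform lower bound for $\operatorname{inj}(M,g(t))$ and the fact that $\Omega_{\xi}$ is a small disk) is independent of $t$ all the way to $t=\infty$; this rests on the smooth-convergence statements of Theorems~\ref{thm: long time existence for NRF on surfaces} and \ref{thm: thm: long time existence for NRF on S^2} and on phrasing the injectivity-radius and diameter bounds along the flow in terms of the listed initial data. Second, one must run the a priori estimates for $K$ and its derivatives cleanly enough that only $|M|_{g_{0}}$, $\sup_{M}K_{g_{0}}$ and $\sup_{M}\Delta K_{g_{0}}$ survive in the final constant, rather than a full $C^{2}$-norm of $K_{g_{0}}$; this is possible precisely because the comparison ball was centred at a maximum of $K$, so that the geodesic-circle error term only sees $K(p_{t})^{2}=(\sup_{M}K_{g(t)})^{2}$ together with $\Delta K(p_{t})$, whose sign there is favourable.
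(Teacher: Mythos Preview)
Your strategy coincides with the paper's: the upper bound comes from small geodesic balls via Gray's expansions, the lower bound from the Bol--Fiala--type inequality of Lemma~\ref{lem:Isoperimetric inequality on surfaces}, and the uniformity in $t$ is read off from which geometric quantities actually enter the error constant. The execution of the lower bound, however, differs in a way worth flagging.

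You assert that for small $\xi$ the isoperimetric region $\Omega_\xi$ is itself a topological disk, and then apply Lemma~\ref{lem:Isoperimetric inequality on surfaces} directly to it. The paper does \emph{not} need this. Instead, for a connected $\Omega$ it passes to the filled-in region $\widetilde\Omega$ (the disk bounded by the outermost boundary curve), observes $|\partial\widetilde\Omega|\leqslant|\partial\Omega|$ and $|\widetilde\Omega|\geqslant|\Omega|$, applies Lemma~\ref{lem:Isoperimetric inequality on surfaces} to $\widetilde\Omega$, and then uses monotonicity of $A\mapsto 4\pi A-(\sup_M K)A^2$ on the relevant range to descend back to $\Omega$; for a disconnected $\Omega$ it treats each component this way and recombines via $\sqrt{x}+\sqrt{y}\geqslant\sqrt{x+y}$ and $x^{3/2}+y^{3/2}\leqslant(x+y)^{3/2}$. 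This is strictly lighter than what you propose: your one-line justifications are not arguments as written --- ``a single component would already be a strictly better competitor'' is false on its face (a single component has the wrong area; the real point is that a \emph{single geodesic ball} of the full area beats any splitting, which needs the leading asymptotics you are proving), and the collar-trimming manoeuvre requires a quantitative first-variation computation you have not supplied. These can be repaired, but the paper's route simply sidesteps them.

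For Part~(2), the paper is also more direct: under normalized Ricci flow it asserts that $|M|$ is constant and that $\sup_M K$ and $\sup_M\Delta K$ are non-increasing, whence $C(t)\leqslant C(0)$ immediately. Your plan (smooth convergence to a constant-curvature limit plus maximum-principle control of $K$ and a derived parabolic equation for $\Delta K$) reaches the same conclusion but makes the dependence on exactly the three listed initial quantities harder to isolate; in particular, feeding gradient estimates back in to control $\Delta K$ risks importing dependence on more of the initial geometry than the statement allows.
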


\begin{proof}
Let's prove the upper bound first. For any fixed metric $g(t)$, small geodesic balls centered at any point $p\in M$ are admissible regions in the definition of $h$. The result of [\cite{Gray}, Theorem 3.1] gives
\begin{equation}\label{eq:expansion of vol of  geodesic ball}
|B_{r}(p)|=\pi r^{2}(1-\frac{K(p)}{12}r^{2}+O(r^{4}))\;\;\text{and}\;\;|\partial B_{r}(p)|=2\pi r(1-\frac{K(p)}{6}r^{2}+O(r^{4})).
\end{equation}

By definition of $h$ we have $h(|B_{r}(p)|/|M|) \leqslant |\partial B_{r}(p)|$, which leads to the desired upper bound as follows. Let 
$$
\xi(p)=\frac{|B_{r}(p)|}{|M|}=\frac{\pi r^{2}}{|M|}(1-\frac{K(p)}{12}r^{2}+O(r^{4})),
$$
then
$$
r(\xi)=\sqrt{\frac{|M|}{\pi}\xi}(1+\frac{K|M|}{24\pi}\xi+O(\xi^{2})).
$$
Putting the expression for $r(\xi)$ into $|\partial B_{r}(p)|$ in (\ref{eq:expansion of vol of  geodesic ball}) we have
\begin{equation*}
\begin{split}
h(\xi)	& \leqslant|\partial B_{r}(p)| = \sqrt{4\pi|M|\xi}-\frac{|M|^{3/2}K}{4\sqrt{\pi}}\xi^{3/2}+O(\xi^{5/2}).
	\end{split}
\end{equation*}

Since the inequality holds for all points $p\in M$, we have
\begin{equation*}
\begin{split}
    h(\xi) \leqslant\sqrt{4\pi|M|\xi}-\frac{|M|^{3/2}\text{sup}_{M}K}{4\sqrt{\pi}}\xi^{3/2}+O(\xi^{2})\qquad as\;\xi\rightarrow0,
	\end{split}
\end{equation*}
which is the desired upper bound for any metric.

In order to compute the coefficient in the error term, we need one more term in the expansion of both $|\partial B_{r}(p)|$ and $|B_{r}(p)|$.

Again the result of [\cite{Gray}, Theorem 3.1] gives
$$
|\partial B_{r}(p)|=2\pi r(1-\frac{K(p)}{6}r^{2}+\frac{1}{240}(2K^{2}-3\Delta K)r^{4}+O(r^{6}))
$$
$$
|B_{r}(p)|=\pi r^{2}(1-\frac{K(p)}{12}r^{2}+\frac{1}{720}(2K^{2}-3\Delta K)r^{4}+O(r^{6})).
$$
If we use these expansions and follow the above computations, we can get
$$
|\partial B_{r}(p)|=\sqrt{4\pi|M|\xi}-\frac{K}{4\sqrt{\pi}}|M|^{3/2}\xi^{3/2}-\frac{6\Delta K+17K^{2}}{288\pi^{3/2}}|M|^{5/2}\xi^{5/2}+O(\xi^{7/2}).
$$
Hence we can choose (thanks to Taylor's theorem with remainder) $$C_1(t)=\underset{(M,g(t))}{\text{sup}\;}\frac{6\Delta K+17K^{2}}{288\pi^{3/2}}|M|_{g(t)}^{5/2}$$ as the coefficient in the error term $O(\xi^2)$ in the upper bound case.\newline

Secondly, let's prove the lower bound. Let's fix a metric $g$ for the moment. Since $M$ is compact, $\text{inj}(M,g)>0$, where $\text{inj}(M,g)$ is the injectivity radius of $(M,g)$. Then choose $\xi$ small enough such that $h(\xi)<\text{inj}(M,g)$. Let $\Omega$ be an isoperimetric region for $\xi$. We distinguish two cases.

\noindent {\bf Case 1}: $\Omega$ is connected.

Fix a point $p\in\partial\Omega$. Let $B$ denote the geodesic ball centered at $p$ with radius $h(\xi)$. Then $\Omega\subseteq B$ and $B$ is simply connected. Together with $\Omega$ being connected, we know $\Omega$ topologically is a disc with finitely many discs removed. Let $\widetilde{\Omega}$ denote the interior of the external
boundary of $\Omega$; namely, $\widetilde{\Omega}$ is $\Omega$ with the “holes” filled in. Then $\widetilde{\Omega}\subseteq{B}$ is simply connected, $|\Omega|\leqslant|\widetilde{\Omega}|$ and $|\partial\widetilde{\Omega}|\leqslant{|\partial\Omega|}$.

\indent {\bf Subcase 1}: $\text{sup}_M K \leqslant{0}$.

Lemma \ref{lem:Isoperimetric inequality on surfaces} gives
$$
|\partial \Omega|^2
\geqslant |\partial \widetilde{\Omega}|^2
\geqslant 4\pi |\widetilde{\Omega}|-(\underset{\widetilde{\Omega}}{\text{sup}\;}K)|\widetilde{\Omega}|^{2}
\geqslant 4\pi |\widetilde{\Omega}|-(\underset{M}{\text{sup}\;}K)|\widetilde{\Omega}|^{2}
\geqslant 4\pi |{\Omega}|-(\underset{M}{\text{sup}\;}K)|{\Omega}|^{2}.
$$

\indent {\bf Subcase 2}: $\text{sup}_M K >{0}$.

Let's further require $\xi\ll1/(2\pi |M| \text{sup}_M K)$ to ensure that $h^2(\xi)\ll\frac{2}{\text{sup}_M K}$. And thus $|B|<\frac{2\pi}{\text{sup}_M K}$. Then
$$
|\Omega|\leqslant|\widetilde{\Omega}|\leqslant{|B|}<{\frac{2\pi}{\text{sup}_M K}}\;\;\;\;\text{and}\;\;\;\;|\partial\widetilde{\Omega}|\leqslant{|\partial\Omega|}.
$$

Lemma \ref{lem:Isoperimetric inequality on surfaces} again gives
$$
|\partial \Omega|^2
\geqslant |\partial \widetilde{\Omega}|^2
\geqslant 4\pi |\widetilde{\Omega}|-(\underset{\widetilde{\Omega}}{\text{sup}\;}K)|\widetilde{\Omega}|^{2}
\geqslant 4\pi |\widetilde{\Omega}|-(\underset{M}{\text{sup}\;}K)|\widetilde{\Omega}|^{2}
\geqslant 4\pi |{\Omega}|-(\underset{M}{\text{sup}\;}K)|{\Omega}|^{2}.
$$

So in both subcases, we get
\begin{equation*}
    \begin{split}
        h(\xi) & \geqslant \sqrt{4\pi |M|\xi-(\underset{M}{\text{sup}\;}K)(|M|\xi)^{2}} \\ & =\sqrt{4\pi |M|\xi}-\frac{\text{sup}_{M}K}{4\sqrt{\pi}}(|M|\xi)^{3/2}-\frac{(\text{sup}_{M}K)^2}{64\pi^{3/2}}(|M|\xi)^{5/2}+O(\xi^{7/2})
    \end{split}
\end{equation*}

So case 1 is done.\newline

\noindent {\bf Case 2}: $\Omega$ is disconnected.

By Theorem \ref{thm: regularity of isoperimetric region}, $\partial \Omega$ has at most finitely many components. So without loss of generality, suppose $\Omega$ has two disjoint connected components, say $W_{1}$ and $W_{2}$. Then
$$
h(\xi)	=|\partial\Omega|=|\partial W_{1}|+|\partial W_{2}|\;\;\text{and}\;\;
\xi	=\xi_{1}+\xi_{2},\;\text{where}\;\xi_{i}=\frac{|W_{i}|}{|M|},i=1,2,
$$
and
$$
h(\xi)< \text{inj}(M,g)\Longrightarrow|\partial W_{1}|,|\partial W_{2}|< \text{inj}(M,g).
$$

Fix two points $p_{1}\in\partial W_{1}$ and $p_{2}\in\partial W_{2}$. Then the argument in case 1 can be applied to $W_{1}$ and $W_{2}$ respectively to get
$$
|\partial W_{1}|	\geqslant\sqrt{4\pi|M|\xi_{1}}-\frac{|M|^{3/2}\text{sup}_{M}K}{4\sqrt{\pi}}\xi_{1}^{3/2}-\frac{|M|^{5/2}(\text{sup}_{M}K)^2}{64\pi^{3/2}}{\xi_1}^{5/2}+O(\xi_1^{7/2})
$$
$$
|\partial W_{2}|	\geqslant\sqrt{4\pi|M|\xi_{2}}-\frac{|M|^{3/2}\text{sup}_{M}K}{4\sqrt{\pi}}\xi_{2}^{3/2}-\frac{|M|^{5/2}(\text{sup}_{M}K)^2}{64\pi^{3/2}}{\xi_2}^{5/2}+O(\xi_2^{7/2}).
$$

Then
\begin{equation*}
    \begin{split}
h(\xi)	& =|\partial W_{1}|+|\partial W_{2}|
	\\ & \geqslant\sqrt{4\pi|M|}(\sqrt{\xi_{1}+\xi_{2}})-\frac{|M|^{3/2}}{4\sqrt{\pi}}\text{sup}_{M}K(\xi_{1}^{3/2}+\xi_{2}^{3/2})-\frac{|M|^{5/2}(\text{sup}_{M}K)^2}{64\pi^{3/2}}({\xi_1}^{5/2}+{\xi_2}^{5/2})+O(\xi^{7/2})
    \end{split}
\end{equation*}

With the help of the following two elementary inequalities:
$$
x^{\gamma}+y^{\gamma}	\leqslant(x+y)^{\gamma}\;\;\;\;\;\text{for all}\; \gamma \geqslant{1},\; x,y\geqslant0
$$
$$
\alpha(\sqrt{x}+\sqrt{y})+\beta(x^{3/2}+y^{3/2})	\geqslant\alpha\sqrt{x+y}+\beta(x+y)^{3/2}\;\;\;\;\text{for all small enough}\;\;x,y\geqslant0
$$
where $\alpha,\beta>0$ are fixed constants, we can see that, no matter the sign of $\text{sup}_{M}K$, we have (make the $\xi$ smaller if needed)
$$
h(\xi)\geqslant\sqrt{4\pi|M|\xi}-\frac{|M|^{3/2}\text{sup}_{M}K}{4\sqrt{\pi}}\xi^{3/2}-\frac{|M|^{5/2}(\text{sup}_{M}K)^2}{64\pi^{3/2}}{\xi}^{5/2}+O(\xi^{7/2}).
$$ 

So case 2 is done and we have proved the desired lower bound for $h(\xi)$.\newline

It's clear that we can choose $$C_2(t)=\frac{(\text{sup}_{M}K)^2}{64\pi^{3/2}}|M|_{g(t)}^{5/2}$$ as the coefficient in the error term $O(\xi^2)$ in the lower bound case.

Therefore, now we can use $C(t)=\text{max}\;\{C_1(t),C_2(t)\}$ as the coefficient in the error term $O(\xi^2)$  for $h_{g(t)}(\xi)$.

Under smooth variation of metrics $g(t)$ over compact time interval $[0,T]$, from \cite{Ehr} we know that injectivity radius of $(M,g(t))$ is lower semi-continuous. So our argument in the lower bound case doesn't break down. Also $|M|$, $\underset{M}{\text{sup}\;}K$, and $\underset{M}{\text{sup}\;}(\Delta K+K^2)$ all change continuously and hence all are bounded, so $C(t)$ is also bounded over $[0,T]$.

If $g(t)$ evolves under normalized Ricci flow, we know the flow exists for all time $t\in [0,+\infty]$. And along the flow, $|M|$ is unchanged, $\text{sup}_M K$ is non-increasing and $\text{sup}_M \Delta K$ is also non-increasing. Thus $C_0=C(0)\geqslant{C(t)}$ for all $t\geqslant{0}$, as desired.
\end{proof}

\begin{rmk}
Now we see that $h(1)=h(0):=0$ is a continuous extension of the isoperimetric profile $h$ when $M$ is a compact surface.
\end{rmk}

\section{Proof of Theorem \ref{thm: Uniform Lipschitz cty of h^2} and Corollary \ref{cor: Uniform Lipschitz cty of h}}

We first need a lemma. 

\begin{lem}\label{lem: Lipschitz cty of h on surfaces}
Let $(M,g)$ be a compact Riemannian surface. Denote $\underset{(M,g)}{\text{inf}\;}K$ by $K_0$. Then the function $h^{2}(\xi)+K_0(\xi|M|)^2$ is concave in $\xi\in(0,1)$ and $h^{2}(\xi)+(K_0-\epsilon)(\xi|M|)^2$ is strictly concave for any $\epsilon>0$. Hence $h(\xi)$ is locally Lipschitz continuous. Moreover, if $h(\xi)$ is differentiable at $\xi$, then $h'(\xi)=\kappa|M|$, where $\kappa$ is the constant geodesic curvature of boundary curves of any isoperimetric region for $\xi$.
\end{lem}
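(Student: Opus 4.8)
The plan is to run the classical parallel-curve, second-variation comparison, which yields both the concavity and the formula $h'=\kappa|M|$ at once. Fix $\xi_0\in(0,1)$ and $\epsilon>0$. By Proposition~\ref{prop:Existence of isoperimetric region} and Theorem~\ref{thm: regularity of isoperimetric region} (with $n=2$ the singular set is empty), choose an isoperimetric region $\Omega_0$ for $\xi_0$; its boundary $\Sigma_0=\partial\Omega_0$ is a disjoint union of finitely many smooth closed curves all of the same constant geodesic curvature $\kappa$, with total length $L_0:=|\Sigma_0|=h(\xi_0)$, and $L_0>0$ because $0<|\Omega_0|<|M|$ on the connected surface $M$. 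For $|t|$ small I would flow $\Sigma_0$ with unit speed along the outward normal of $\Omega_0$ to obtain parallel curves $\Sigma_t=\partial\Omega_t$ with $\Omega_t\supseteq\Omega_0$ for $t\ge0$ and $\Omega_t\subseteq\Omega_0$ for $t\le0$; a collar/tubular-neighbourhood argument guarantees the $\Sigma_t$ are smooth, embedded, and depend smoothly on $t$. Set $L(t)=|\Sigma_t|$, $A(t)=|\Omega_t|$, $\xi(t)=A(t)/|M|$.

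Next I would record the variational identities along this flow: $A'(t)=L(t)$; the first variation of length $L'(t)=\int_{\Sigma_t}\kappa_t\,ds_t$ (with $\kappa_t$ the geodesic curvature of $\Sigma_t$); and, differentiating once more and using $\partial_t(ds_t)=\kappa_t\,ds_t$ together with the Riccati equation $\partial_t\kappa_t=-(\kappa_t^2+K)$ for parallel curves, the cancellation of the $\kappa_t^2$ terms gives $L''(t)=-\int_{\Sigma_t}K\,ds_t$. At $t=0$ this yields $L'(0)=\kappa L_0$ and $L''(0)=-\int_{\Sigma_0}K\,ds_0\le-K_0L_0$. Since $\xi'(0)=L_0/|M|>0$, the map $t\mapsto\xi(t)$ is a local diffeomorphism near $0$, so $\psi:=L\circ t(\cdot)$ is a $C^2$ function near $\xi_0$ with $\psi(\xi_0)=h(\xi_0)$ and, each $\Omega_t$ being an admissible competitor of area ratio $\xi(t)$, with $\psi\ge h$ on a neighbourhood of $\xi_0$. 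The chain rule then gives $\psi'(\xi_0)=|M|L'(0)/L_0=\kappa|M|$ and $\psi''(\xi_0)=|M|^2\big(L''(0)L_0-L'(0)^2\big)/L_0^3\le-|M|^2(\kappa^2+K_0)/L_0$.

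Then I would package this into an upper barrier for $G_\epsilon(\xi):=h^2(\xi)+(K_0-\epsilon)(\xi|M|)^2$. Namely $F_\epsilon(\xi):=\psi^2(\xi)+(K_0-\epsilon)(\xi|M|)^2$ is $C^2$ near $\xi_0$, satisfies $F_\epsilon\ge G_\epsilon$ with equality at $\xi_0$ (using $\psi\ge h\ge0$), and
$$
F_\epsilon''(\xi_0)=2\psi'(\xi_0)^2+2\psi(\xi_0)\psi''(\xi_0)+2(K_0-\epsilon)|M|^2\le 2\kappa^2|M|^2-2|M|^2(\kappa^2+K_0)+2(K_0-\epsilon)|M|^2=-2\epsilon|M|^2<0 .
$$
So $G_\epsilon$ has, at every point of $(0,1)$, a $C^2$ upper barrier with strictly negative second derivative, and a standard comparison argument then forces $G_\epsilon$ to be strictly concave: if $G_\epsilon$ failed to lie above some chord over $[c,d]\subseteq(0,1)$ (or met it at an interior point), the relevant difference would attain an interior minimum at a point $\xi_\ast$ where an affine function touches $G_\epsilon$ from below, hence $F_\epsilon$ minus that affine function would have an interior minimum at $\xi_\ast$, forcing $F_\epsilon''(\xi_\ast)\ge0$, a contradiction. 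Letting $\epsilon\to0^+$ exhibits $h^2+K_0(\xi|M|)^2$ as a pointwise limit of concave functions, hence concave. Local Lipschitz continuity of $h$ follows: $h^2+K_0(\xi|M|)^2$, being concave on the open interval $(0,1)$, is locally Lipschitz there, so $h^2$ is, and since $h>0$ on every compact subinterval of $(0,1)$, $h=\sqrt{h^2}$ is locally Lipschitz too. For the derivative formula, if $h$ is differentiable at $\xi_0$ then so is $h^2$, and $0\le\psi^2-h^2$ with equality at $\xi_0$ forces $(h^2)'(\xi_0)=(\psi^2)'(\xi_0)=2h(\xi_0)\kappa|M|$, i.e.\ $h'(\xi_0)=\kappa|M|$; as this argument applies to any isoperimetric region for $\xi_0$, the value $\kappa$ is unambiguous.

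I expect the real work to be in the two geometric inputs rather than the bookkeeping: verifying that the parallel flow of $\Sigma_0$ stays smooth and embedded with smooth dependence on $t$, so that $\psi$ is genuinely $C^2$ (this uses the finiteness and smoothness of the components of $\Sigma_0$ from Theorem~\ref{thm: regularity of isoperimetric region} together with a tube argument), and the clean derivation of $L''(t)=-\int_{\Sigma_t}K\,ds_t$ via the Riccati equation for the geodesic curvature of parallel curves. The concavity-from-barrier step is routine but is genuinely cleaner in the $\epsilon$-perturbed form, where the barrier's second derivative is strictly negative — which is exactly why the statement is phrased with the extra $\epsilon$. Finally, the constant-$\kappa$ structure of $\partial\Omega_0$ is essential: it is what makes $L'(0)=\kappa L_0$, and hence pins down both $\psi'(\xi_0)$ and $h'(\xi_0)$.
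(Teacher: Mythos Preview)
Your argument is correct and is precisely the classical Bavard--Pansu parallel-flow/second-variation computation specialized to $n=2$; the identities $A'=L$, $L'=\int\kappa$, $L''=-\int K$, the chain rule to pass to the $\xi$-variable, and the $C^2$-upper-barrier-to-concavity step are all sound, as is your derivation of $h'(\xi_0)=\kappa|M|$ from the touching condition $\psi\ge h$. The paper, by contrast, does not carry out any of this: its entire proof is the one-line citation of Theorem~V.4.3 of \cite{Cha} (restated immediately afterwards as Proposition~\ref{prop: weak derivative of isoperimetric profile in the barrier sense}) together with the regularity Theorem~\ref{thm: regularity of isoperimetric region}. So your approach and the paper's are mathematically identical---you have simply written out, in the surface case, the proof of the theorem the paper invokes as a black box. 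What you gain is a self-contained argument with no external dependence; what the paper gains is brevity.
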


\begin{proof}
This follows easily from Theorem V.4.3 of \cite{Cha} and Theorem \ref{thm: regularity of isoperimetric region}.
\end{proof}

For completeness, we state Theorem V.4.3 of \cite{Cha} in the following.

\begin{prop}[\cite{BP}. Theorem V.4.3 of \cite{Cha}]\label{prop: weak derivative of isoperimetric profile in the barrier sense}
Let $(M^n,g)$ be a compact Riemannian manifold and $\Omega \subseteq{M}$ be an isoperimetric region with smooth boundary and of constant mean curvature $\kappa$ with respect to $-\nu$, where $\nu$ is the unit exterior unit normal vector field along $\partial \Omega$. Then the isoperimetric profile $h(\xi)$ has weak left- and right- first derivatives and second derivative in the barrier sense satisfying
$$
\frac{D^+h}{d\xi}\leqslant \kappa|M| \leqslant \frac{D^-h}{d\xi}\;\;\;\;at\;\;\xi=\frac{|\Omega|}{|M|},
$$
and
$$
\frac{D^2h}{d\xi^2}\leqslant -\frac{|M|^2}{h^2(\xi)}\intop_{\partial \Omega}{||\mathrm{II}||^2 + Ric(\nu,\nu)dA}\;\;\;\;\;at\;\;\xi=\frac{|\Omega|}{|M|},
$$where $\mathrm{II}$ is then second fundamental form of $\partial \Omega$ relative to $\nu$ and $Ric$ is the Ricci curvature.

In particular, if the Ricci curvature of $M$ is bounded from below, then 
$$
\frac{D^2}{d\xi^2}\{h^2(\xi)+(\underset{M}{\text{inf}\;}Ric)(\xi|M|)^2\}\leqslant 0\;\;\;\;\;at\;\;\xi=\frac{|\Omega|}{|M|}.
$$ In this case, the function $\xi \longmapsto h^2(\xi)+(\underset{M}{\text{inf}\;}Ric)(\xi|M|)^2$ is concave, which implies $h(\xi)$ is locally Lipschitz. Also in this case, the function $\xi \longmapsto h^2(\xi)+(\underset{M}{\text{inf}\;}Ric-\epsilon)(\xi|M|)^2$ is strictly concave for any $\epsilon>0$.
\end{prop}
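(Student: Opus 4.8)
The plan is to produce, at the point $\xi_0:=|\Omega|/|M|$, a smooth \emph{upper barrier} for $h$ by deforming $\Omega$ through the equidistant hypersurfaces of $\partial\Omega$, and then to read off the one-sided first derivatives and the barrier-sense second derivative from a direct first- and second-variation computation. First I would set, for $x\in\partial\Omega$ and small $|s|$, $F(x,s)=\exp_x(s\nu(x))$; below the focal distance of $\partial\Omega$ — which is positive because $\partial\Omega$ is smooth and compact — the sets $\partial\Omega_s:=F(\partial\Omega,s)$ are smooth embedded hypersurfaces bounding smooth regions $\Omega_s$ with $\Omega_0=\Omega$. Writing $V(s)=|\Omega_s|$, $A(s)=|\partial\Omega_s|$ and $\xi(s)=V(s)/|M|$, the unit normal speed of the flow gives $V'(s)=A(s)>0$, so $\xi$ is a smooth strictly increasing reparametrization near $s=0$ and $\phi:=A\circ\xi^{-1}$ is smooth near $\xi_0$ with $\phi(\xi_0)=|\partial\Omega|=h(\xi_0)$. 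Since each $\Omega_s$ is an admissible competitor for the ratio $\xi(s)$, the definition of $h$ gives $h(\xi(s))\le A(s)$, i.e. $h\le\phi$ near $\xi_0$; hence $\phi$ is an upper barrier at $\xi_0$ and, by definition of the barrier-sense derivatives, $\dfrac{D^+h}{d\xi}(\xi_0)\le\phi'(\xi_0)\le\dfrac{D^-h}{d\xi}(\xi_0)$ and $\dfrac{D^2h}{d\xi^2}(\xi_0)\le\phi''(\xi_0)$.

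It then remains to compute $\phi'(\xi_0)$ and $\phi''(\xi_0)$. The first variations of volume and area (with the orientation conventions of \cite{Cha}, and using that the mean curvature of $\partial\Omega$ is the constant $\kappa$) give $V'(0)=|\partial\Omega|$ and $A'(0)=\int_{\partial\Omega}\kappa\,dA=\kappa|\partial\Omega|$, so $\phi'(\xi_0)=|M|\,A'(0)/V'(0)=\kappa|M|$, which is the first displayed inequality. For the second derivative I would differentiate $V'=A$ to get $V''(0)=A'(0)=\kappa|\partial\Omega|$, and compute $A''(0)$ from the evolution along the equidistant flow: the area element satisfies $\partial_s\,dA_s=H_s\,dA_s$ and the mean curvature obeys the Riccati equation $\partial_sH_s=-|\mathrm{II}_s|^2-\mathrm{Ric}(\nu_s,\nu_s)$, whence $A''(0)=\int_{\partial\Omega}\big(\partial_sH_s+H_s^2\big)\big|_{s=0}\,dA=\int_{\partial\Omega}\big(\kappa^2-|\mathrm{II}|^2-\mathrm{Ric}(\nu,\nu)\big)\,dA$. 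Substituting into $\phi''(\xi_0)=|M|^2\big(A''(0)V'(0)-A'(0)V''(0)\big)/V'(0)^3$, the $\kappa^2$-terms cancel and one is left with $\phi''(\xi_0)=-\dfrac{|M|^2}{h^2(\xi_0)}\int_{\partial\Omega}\big(|\mathrm{II}|^2+\mathrm{Ric}(\nu,\nu)\big)\,dA$, the second displayed inequality.

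For the ``in particular'' claim I would take $\psi:=\phi^2+(\inf_M\mathrm{Ric})(\xi|M|)^2$. Since $\phi\ge h\ge0$ near $\xi_0$ with equality at $\xi_0$, $\psi$ is an upper barrier there for $\xi\mapsto h^2(\xi)+(\inf_M\mathrm{Ric})(\xi|M|)^2$, so it suffices to see $\psi''(\xi_0)\le0$. From the formulas above, $\psi''(\xi_0)=2\phi'(\xi_0)^2+2\phi(\xi_0)\phi''(\xi_0)+2(\inf_M\mathrm{Ric})|M|^2=2|M|^2\Big(\kappa^2-\tfrac1{h(\xi_0)}\int_{\partial\Omega}\big(|\mathrm{II}|^2+\mathrm{Ric}(\nu,\nu)\big)\,dA+\inf_M\mathrm{Ric}\Big)$. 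Using $\mathrm{Ric}(\nu,\nu)\ge\inf_M\mathrm{Ric}$ pointwise and, in dimension two, $|\mathrm{II}|^2=\kappa^2$ identically, the integral is $\ge(\kappa^2+\inf_M\mathrm{Ric})\,h(\xi_0)$, so the parenthesis is $\le0$; therefore $\frac{D^2}{d\xi^2}\{h^2+(\inf_M\mathrm{Ric})(\xi|M|)^2\}\le0$ at every $\xi_0$, and the standard fact that a continuous function whose barrier-sense second derivative is $\le0$ at each interior point is concave yields concavity on $(0,1)$; replacing $\inf_M\mathrm{Ric}$ by $\inf_M\mathrm{Ric}-\epsilon$ makes the parenthesis strictly negative, giving strict concavity. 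This is exactly what is needed in Lemma~\ref{lem: Lipschitz cty of h on surfaces}, where $\mathrm{Ric}=Kg$. (In dimension $n\ge3$ the only pointwise bound available is $|\mathrm{II}|^2\ge H^2/(n-1)=\kappa^2/(n-1)$, which yields just $\psi''(\xi_0)\le\tfrac{2(n-2)}{n-1}|M|^2\kappa^2$; since only the surface case is used downstream, the robust output there is the second-derivative inequality itself.)

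The main obstacle is the regularity bookkeeping for the barrier: I must check that $\phi=A\circ\xi^{-1}$ is genuinely $C^2$ across $s=0$ — so that a single two-sided smooth barrier controls $D^+h/d\xi$, $D^-h/d\xi$ and $D^2h/d\xi^2$ simultaneously — which requires the equidistant flow to stay an embedding for small $|s|$ (guaranteed below the focal radius) and the first and second variation formulas to hold down to $s=0$, together with fixing the orientation conventions so that the $\kappa$ in $\phi'(\xi_0)=\kappa|M|$ comes out with the correct sign. The variational computations themselves — first variation of area and volume, and the Riccati equation for the mean curvature of the equidistant hypersurfaces — are standard.
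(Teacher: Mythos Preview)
The paper does not prove this proposition; it is quoted verbatim as a known result from \cite{BP} and Chavel \cite[Theorem V.4.3]{Cha} and is only invoked (in its surface form) to derive Lemma~\ref{lem: Lipschitz cty of h on surfaces}. Your barrier-via-equidistant-hypersurfaces argument is precisely the standard proof found in those references: one flows $\partial\Omega$ along the outward normal, uses the resulting smooth family $\Omega_s$ as competitors to obtain an upper barrier $\phi$ for $h$, and reads off $\phi'(\xi_0)=\kappa|M|$ and $\phi''(\xi_0)=-\frac{|M|^2}{h^2}\int_{\partial\Omega}(|\mathrm{II}|^2+\mathrm{Ric}(\nu,\nu))\,dA$ from the first variation and the Riccati equation. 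So for the two displayed inequalities your proof is correct and is essentially the same as the one in the cited sources.

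Your caveat about the ``in particular'' clause is well taken and worth emphasizing. From the barrier $\psi=\phi^2+(\inf_M\mathrm{Ric})(\xi|M|)^2$ one gets
\[
\psi''(\xi_0)=2|M|^2\Big(\kappa^2-\tfrac{1}{h(\xi_0)}\!\int_{\partial\Omega}\!\big(|\mathrm{II}|^2+\mathrm{Ric}(\nu,\nu)\big)\,dA+\inf_M\mathrm{Ric}\Big),
\]
and the only general pointwise bound is $|\mathrm{II}|^2\ge\kappa^2/(n-1)$, which forces $\psi''(\xi_0)\le 0$ exactly when $n=2$. Thus the concavity of $h^2+(\inf_M\mathrm{Ric})(\xi|M|)^2$ as stated does \emph{not} follow from the displayed second-derivative inequality in dimensions $n\ge 3$; there the correct concave quantity involves $h^{n/(n-1)}$ rather than $h^2$. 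The proposition as transcribed in the paper is therefore over-stated for general $n$, but since only the surface case is used (in Lemma~\ref{lem: Lipschitz cty of h on surfaces}, where $\mathrm{Ric}=Kg$ and $|\mathrm{II}|^2=\kappa^2$), your argument supplies exactly what is needed downstream.
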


\begin{proof}[\textbf{Proof of Theorem \ref{thm: Uniform Lipschitz cty of h^2}.}]
$\;$\newline
\noindent {\bf Claim 1:} If $K_0:=min\{\underset{(M,g_0)}{\text{inf}\;}K, \text{0}\}$, then $K(p,t)\geq{K_0}$ 
for all points $p\in M$ and all $t \in [0,\infty]$.

\noindent {Proof of Claim 1.}
Under the normalized Ricci flow, we have $\frac{\partial K}{\partial t}=\Delta K+2K(K-\Bar{K})$, which implies that $\frac{\partial }{\partial t}K_{min}\geqslant 2K_{min}(K_{min}-\Bar{K})$. If $\Bar{K}\leqslant{0}$, then $\underset{(M,g_0)}{\text{inf}\;}K \leqslant{\Bar{K}}\leqslant{0}$ and it is easy to see that $\frac{\partial }{\partial t}K_{min}\geqslant{0}$. If $\Bar{K}>0$, we distinguish two cases.\\ 
\noindent {\bf Case 1}: $\underset{(M,g_0)}{\text{inf}\;}K \leqslant{0}$.

We have $K_{min}\big{|}_{t=0}\leqslant{0}$ and thus $\frac{\partial }{\partial t}K_{min}\big{|}_{t=0}\geqslant{0}$. So whenever $K_{min}\leqslant{0}$, we have $\frac{\partial }{\partial t}K_{min}\geqslant{0}$. If $K_{min}$ later becomes strict positive, then $K_{min}$ will always be strict positive. 

\noindent {\bf Case 2}: $\underset{(M,g_0)}{\text{inf}\;}K >{0}$.

We have $K_{min}\big{|}_{t=0}>{0}$, and then $K_{min}$ will always be strict positive. \\Thus claim 1 follows. \newline

\noindent {\bf Claim 2:} For each fixed $t\in[0,+\infty]$, $Q(t,\xi):=h_{g(t)}^{2}(\xi)+K_{0}(\xi|M|_{g(t)})^{2}$ is concave in $\xi \in [0,1]$, where $K_0$ comes from the claim 1.

\noindent {Proof of Claim 2.} Claim 1 and Lemma \ref{lem: Lipschitz cty of h on surfaces} imply for all $t\in[0,+\infty]$, $Q(t,\xi)$ is concave in $\xi \in (0,1)$. Using continuous extension of $h(\xi)$, we see that $Q(t,\xi)$ is left-continuous at $\xi=0$ and right-continuous at $\xi=1$. Hence the super-linear inequality $Q(t,\alpha\xi+\beta\eta)\geqslant \alpha Q(t,\xi) + \beta Q(t,\eta)$ in the definition of concavity is preserved when $\xi=0$ or $\eta=1$ by taking limit. Thus claim 2 follows. \newline 

\noindent {\bf Claim 3:} If $\frac{\partial Q(t,\xi)}{\partial \xi}\big{|}_{\xi=0^+}$ and  $\frac{\partial Q(t,\xi)}{\partial \xi}\big{|}_{\xi=1^-}$ exist, then $$\frac{\partial Q(t,\xi)}{\partial \xi}\big{|}_{\xi=0^+} \leqslant 4\pi|M|_{g_0}\;\;\;\; \text{and}\;\;\;\;\;\frac{\partial Q(t,\xi)}{\partial \xi}\big{|}_{\xi=1^-} \geqslant -4\pi|M|_{g_0}+2K_0|M|^2_{g_0}.$$

\noindent {Proof of Claim 3.} Using Theorem \ref{thm: error coeff control of h(g(t))}, we have
\begin{equation*}
    \begin{split}
\frac{\partial Q(t,\xi)}{\partial \xi}\big{|}_{\xi=0^+}& =\underset{\epsilon\rightarrow 0^+}{\text{lim}\;}\frac{Q(t,\epsilon)-Q(t,0)}{\epsilon-0}
=\underset{\epsilon\rightarrow 0^+}{\text{lim}\;}\frac{h^2_{g(t)}(\epsilon)+K_0|M|^2_{g(t)}\epsilon^2}{\epsilon}
\\ & \leqslant \underset{\epsilon\rightarrow 0^+}{\text{lim}\;}\frac{4\pi|M|_{g(t)}\epsilon+(C_0+K_0|M|^2_{g(t)})\epsilon^2}{\epsilon}
=4\pi|M|_{g_0},
    \end{split}
\end{equation*} where $C_0$ is a uniform constant depending only on $|M|_{g_0}$ and $\text{sup}_{(M,g_0)}K$.

\begin{equation*}
    \begin{split}
\frac{\partial Q(t,\xi)}{\partial \xi}\big{|}_{\xi=1^-} & =\underset{\xi\rightarrow 1^-}{\text{lim}\;}\frac{Q(t,\xi)-Q(t,1)}{\xi-1} =\underset{\xi\rightarrow 1^-}{\text{lim}\;}\frac{h^2_{g(t)}(\xi)}{\xi-1}+2K_0|M|^2_{g(t)} \\ & =\underset{\epsilon\rightarrow 0^+}{\text{lim}\;}\frac{h^2_{g(t)}(1-\epsilon)}{-\epsilon}+2K_0|M|^2_{g(t)}=-\frac{\partial Q(t,\xi)}{\partial \xi}\big{|}_{\xi=0^+}+2K_0|M|^2_{g(t)}
\\ & \geqslant -4\pi|M|_{g_0}+2K_0|M|^2_{g_0}.
    \end{split}
\end{equation*}

\noindent {\bf Claim 4:} $Q(t,\xi)$ (where $\xi\in[0,1]$) is uniform Lipschitz continuous with uniform Lipschitz constant bounded by $4\pi|M|_{g_0}+2|K_0||M|^2_{g_0}$ over time-interval $[0,+\infty]$.

\noindent {Proof of Claim 4.}
By elementary properties of concave function, we have for all $t\in[0,+\infty]$ and for all $0<\eta_1<\eta_2<1$,
\begin{equation*}
    \begin{split}
\frac{\partial Q(t,\xi)}{\partial \xi}\big{|}_{\xi=0^+}
\geqslant 
\frac{\partial Q(t,\xi)}{\partial \xi}\big{|}_{\xi=\eta_1^{-}} & \geqslant 
\frac{\partial Q(t,\xi)}{\partial \xi}\big{|}_{\xi=\eta_1^{+}} \\ &\geqslant 
\frac{Q(t,\eta_1)-Q(t,\eta_2)}{\eta_1-\eta_2} 
\\ & \geqslant 
\frac{\partial Q(t,\xi)}{\partial \xi}\big{|}_{\xi=\eta_2^{-}} \geqslant 
\frac{\partial Q(t,\xi)}{\partial \xi}\big{|}_{\xi=\eta_2^{+}} \geqslant 
\frac{\partial Q(t,\xi)}{\partial \xi}\big{|}_{\xi=1^-}
    \end{split}
\end{equation*}
Hence, $\frac{\partial Q(t,\xi)}{\partial \xi}\big{|}_{\xi=0^+}$ exists but may be $+\infty$ and $\frac{\partial Q(t,\xi)}{\partial \xi}\big{|}_{\xi=1^-}$ exists but may be $-\infty$. But claim 3 tells us that both $\frac{\partial Q(t,\xi)}{\partial \xi}\big{|}_{\xi=0^+}$ and $\frac{\partial Q(t,\xi)}{\partial \xi}\big{|}_{\xi=1^-}$ are finite. 

Let's denote $\text{max}\;\{\big{|}\frac{\partial Q(t,\xi)}{\partial \xi}\big{|}_{\xi=0^+}\big{|}, \big{|}\frac{\partial Q(t,\xi)}{\partial \xi}\big{|}_{\xi=1^-}\big{|}\}$ by $N$, then
$$
\big{|}\frac{Q(t,\eta_1)-Q(t,\eta_2)}{\eta_1-\eta_2}\big{|}\leqslant N \leqslant 4\pi|M|_{g_0}+2|K_0||M|^2_{g_0}\;\;\;\;\forall\eta_1 \neq \eta_2 \in [0,1],
$$ which gives the claim.\newline

Finally, to finish the proof, denote $f(\xi):=-K_0(\xi|M|_{g(t)})^2$ (where $\xi\in[0,1]$), which has uniform Lipschitz constant $2|K_0
||M|^2_{g_0}$ over time-interval $[0,+\infty]$. Then, by claim 4,  $h^2_{g(t)}(\xi)=Q(t,\xi)+f(\xi)$ has uniform Lipschitz constant bounded by $4\pi|M|_{g_0}+2|K_0||M|^2_{g_0}+2|K_0
||M|^2_{g_0}=4\pi|M|_{g_0}+4|K_0||M|^2_{g_0}$ over time-interval $[0,+\infty]$, as desired.
\end{proof}

\begin{proof}[\textbf{Proof of Corollary \ref{cor: Uniform Lipschitz cty of h}}.]
$\;$\newline
For any $t\in [0,T]$ and any $\xi\neq \eta \in [\xi_0, \xi_1]$, we have, denote $K_0=min\{\underset{(M,g_0)}{\text{inf}\;}K, \text{0}\}$,
$$
\big{|}\frac{h_{g(t)}(\xi)-h_{g(t)}(\eta)}{\xi-\eta}\big{|}=\big{|}\frac{1}{h(\xi)+h(\eta)}\big{|}\big{|}\frac{h^2(\xi)-h^2(\eta)}{\xi-\eta}\big{|} \leqslant 2\alpha^{-1}( \pi|M|_{g_0}+|K_0||M|^2_{g_0}),
$$ where $\alpha=\alpha(T):=\text{inf}\;\{h_{g(t)}(\xi):t\in [0,T],\xi \in [\xi_0,\xi_1]\}>0$ by Theorem \ref{thm: joint cty of h}. 
\end{proof}

\begin{rmk}
Because normalized Ricci flow on any compact surface converges to a limiting metric continuously, we have $\alpha(\infty)>0$. To show this precisely, first use the change of variable $s(t)=\frac{t}{1-t}$, for $t\in [0,\infty)$, to get a continuous family of metrics $g(s)$ on $[0,1)$. The convergence of normalized Ricci flow extends this family of metrics $g(s)$ to the interval $[0,1]$ continuously and then $\alpha(\infty)>0$ follows from Theorem \ref{thm: joint cty of h}.
\end{rmk}

\end{document}